\def\qed{\hfill$\Box$}
\def\proof{\noindent{\it Proof.~~}}
\newtheorem{theorem}{Theorem}
\newtheorem{proposition}{Proposition}
\newtheorem{lemma}{Lemma}
\newtheorem{corollary}{Corollary}
\newtheorem{conjecture}{Conjecture}
\newtheorem{remark}{Remark}
\newcommand{\Z}{\mbox{${\mathbb
 Z}$}}
\title{Rainbow--free $3$--colorings of Abelian Groups}
 \author{Amanda Montejano} 
 \address{Instituto de Matem\'aticas}
 \email{montejano.a@gmail.com}
 \author{Oriol Serra} 
 \address{Departament de Matem\`atica Aplicada IV,
        Universitat Polit\`ecnica de Catalunya.}
 \email{oserra@ma4.upc.edu}
\date{}
\begin{document}

\maketitle

\begin{abstract}
A $3$--coloring of the elements of an abelian group is said to be
rainbow--free if there
is no $3$--term arithmetic progression with its members having
pairwise distinct colors. We give a structural characterization of rainbow--free
colorings of abelian groups. This characterization proves a
conjecture of Jungi\'c et al. on the size of the smallest chromatic
class of a rainbow--free $3$--coloring of cyclic groups.
\end{abstract}

\section{Introduction}

A \emph{$k$--coloring} of a set $X$ is a
map  $c:X\rightarrow [k]$ where $[k]=\{1,2,...k\}$.
 A subset $Y\subset X$ is {\em rainbow} under $c$ if the
coloring assigns pairwise distinct colors to the elements of $Y$.
The study of the existence of rainbow structures falls into the
anti--Ramsey theory initiated by Erd\H{o}s, Simonovits and
S\'os~\cite{erdos}. Arithmetic versions of this theory were
initiated by  Jungi\'{c}, Licht, Mahdian, Ne\v{s}et\v{r}il and
Radoi\v{c}i\'{c}~\cite{jungic} where the authors study the existence
of rainbow arithmetic progressions in colorings of cyclic
groups and of intervals of integers.

\textcolor{black}{In the case of colorings of the
integers, it was shown by Axenovich and Fon der Flaas \cite{axenovich} that every
$3$--coloring of the integer interval $[1,n]$ such that each color class has cardinality at least
$(n+4)/6$ contains a rainbow $3$--term aritmetic progression, thus proving a conjecture stated
in \cite{jungic}. However, the same authors show that, no matter how large is the smaller color class,
there are examples of $k$--colorings with no rainbow arithmetic progressions of length $k\ge 5$. Conlon, Jungi\'{c} and Radoi\v{c}i\'{c}~\cite{conlon} gave a construction
of equinumerous $4$--colorings with no rainbow $4$--term arithmetic progressions. The canonical version of van der Waerden
theorem by Erd\H os and Graham states that every coloring  of the integers (with possibly infinitely many colors) contains
either a monochromatic or a rainbow $k$--term arithmetic progression for each $k$. By the celebrated theorem of Szemer\'edi, if one of the color classes has positive density then one finds a monochromatic arithmetic progression of length $k$. In contrast,
Jungi\'{c} et al.~\cite{jungic} show that there are colorings with all color classes with positive density with no rainbow
$3$--term arithmetic progressions. }

\textcolor{black}{In the above mentioned reference of Jungi\'{c}et al.~\cite{jungic} the authors also study the existence of
rainbow $3$--term arithmetic progressions in $3$--colorings of finite cyclic groups.    The authors characterize all integers $n$ such that every $3$--coloring of the cyclic group $\Z/n\Z$ contains a rainbow $3$-term arithmetic progression.   }

\begin{theorem}[Jungi\'{c} et al.~\cite{jungic}]\label{thm:m=0} For every integer $n$, there
is a rainbow--free $3$--coloring of $\Z/n\Z$ with non--empty color
classes, if and only if $n$ does not satisfy any of the following
conditions:
\textcolor{black}{
\begin{enumerate}
\item[(a)] $n$ is a power of $2$.
\item[(b)] $n$ is a prime and the multiplicative order of $2$ is $n-1$.
\item[(b)] $n$ is a prime, the multiplicative order of $2$ is $(n-1)/2$  and $(n-1)/2$ is odd.
\end{enumerate}
}
\end{theorem}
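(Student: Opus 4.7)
For sufficiency (showing a rainbow--free $3$--coloring exists when $n$ fails (a), (b), and (c)), I would present two constructions. If $n$ is neither a power of $2$ nor prime, then $n$ has an odd prime divisor $p<n$, and $H=p\Z/n\Z$ is a proper subgroup of index $p$. Color $0$ with color $1$, $H\setminus\{0\}$ with color $2$, and the complement $\Z/n\Z \setminus H$ with color $3$. Because the quotient $\Z/n\Z / H \cong \Z/p\Z$ has odd order, the doubling map is a bijection on the quotient fixing only $0$, so $b\in H$ iff $2b\in H$ for every $b\neq 0$; combined with $H=-H$, this forces every $3$--AP through $0$ to use at most two colors, while $3$--APs avoiding $0$ only use colors $\{2,3\}$. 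If instead $n=p$ is prime failing both (b) and (c), let $K=\langle 2,-1\rangle\le (\Z/p\Z)^{*}$ be the multiplicative subgroup generated by $2$ and $-1$. In the cyclic group $(\Z/p\Z)^{*}$ one has $|K|=\mathrm{ord}(2)$ when $\mathrm{ord}(2)$ is even and $|K|=2\,\mathrm{ord}(2)$ when $\mathrm{ord}(2)$ is odd, so the failure of both (b) and (c) is exactly the statement that $K$ is a proper subgroup. Set $c(0)=1$, $c(K)=2$, and $c((\Z/p\Z)^{*}\setminus K)=3$: invariance under $x\mapsto 2x$ and $x\mapsto -x$ yields rainbow--freeness by the same two--case argument.

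For necessity, suppose $c$ is a rainbow--free $3$--coloring with non-empty classes. In case (a), $n=2^k$, I would proceed by induction on $k$; the cases $k=1,2$ are immediate (in $\Z/4\Z$ the short arithmetic progressions $(x,x+1,x+2)$ suffice), and for the step one argues that the restriction of $c$ to the index--$2$ subgroup $2\Z/2^k\Z \cong \Z/2^{k-1}\Z$ still uses all three colors---if one color were confined to the odd coset, consecutive triples $(x,x+1,x+2)$ would force excessive agreement on the even part---so the induction hypothesis yields a contradiction. In cases (b) and (c), the goal is to show that, up to translation, some color class is the singleton $\{0\}$, reducing the problem to partitioning $(\Z/p\Z)^{*}$ into two non-empty sets each invariant under $x\mapsto 2x$ and $x\mapsto -x$. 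Such a partition requires $\langle 2,-1\rangle$ to be a proper subgroup of $(\Z/p\Z)^{*}$; but the hypotheses of (b) and (c) are precisely those under which $\langle 2,-1\rangle=(\Z/p\Z)^{*}$, a contradiction.

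The main obstacle is the reduction, in the necessity argument, that allows us to assume one color class is a singleton, since a rainbow--free coloring need not have one a priori; this step requires extracting structural information from $3$--APs not passing through a chosen element, and is essentially a structural characterization of rainbow--free $3$--colorings of $\Z/p\Z$. A natural approach is to consider, for each color class $A$, the equations $a+a'=2b$ with $a,a'\in A$, which constrain $b$ to lie in a union of the other classes, and to iterate such constraints to shrink the smallest class. The inductive step in case (a) is also delicate because doubling is not injective on $\Z/2^k\Z$, so verifying that the even subgroup still sees all three colors demands careful combinatorial analysis of how the third color can avoid $2\Z/2^k\Z$.
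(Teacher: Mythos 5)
This statement is quoted from Jungi\'c et al.\ \cite{jungic}; the paper gives no proof of it, so there is no internal argument to compare yours against. Judged on its own, your sufficiency direction is correct and complete: the coset coloring for composite $n$ with an odd prime factor, and the coloring of $\Z/p\Z$ by $\{0\}$, $K=\langle 2,-1\rangle$ and its complement, are both valid, and your computation that $K$ is proper exactly when (b) and (c) both fail is right. These are in fact the same constructions the paper uses later (Proposition~\ref{prop:prime} and the second half of Corollary~\ref{cor:cjt}).

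The necessity direction, however, has two genuine gaps, both of which you flag but neither of which you close. First, for $n=2^k$ the inductive step rests on the claim that the even subgroup $2\Z/2^k\Z$ must meet all three colors; ``consecutive triples would force excessive agreement on the even part'' is not an argument, and the situation is delicate precisely because doubling is $2$-to-$1$ on $\Z/2^k\Z$, so the constraint $(A+B)\cap 2\cdot C=\emptyset$ excludes two preimages at once and the odd-order reasoning does not transfer. (Compare Lemma~\ref{lem:monocoset}, which proves the related but weaker statement that neither coset of $2\cdot G$ is monochromatic, and already needs a minimality argument.) Second, and more seriously, for $n=p$ prime the entire content of the necessity of (b) and (c) is the claim that some color class is a singleton, i.e.\ $m(p)\le 1$. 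This is exactly Theorem~\ref{thm:n/q} (also imported from \cite{jungic} without proof), and it is a real theorem: your proposed approach of ``iterating constraints $a+a'=2b$ to shrink the smallest class'' is only a heuristic, and making it work requires Cauchy--Davenport/Vosper-type sumset arguments or the structural machinery this paper develops. As written, the proof establishes only the ``if'' half of the theorem.
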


\textcolor{black}{The above result motivates the following notation. } We denote by ${\mathcal P}_0$   the set
of primes  $p$ for which $2$ has either
multiplicative order $p-1$, or multiplicative order $(p-1)/2$ with
$(p-1)/2$ odd. Let ${\mathcal P}_1$ be the set of remaining primes.

Following \cite{jungic} we let $m(n)$ denote the largest integer $m$
for which there is a rainbow--free $3$--coloring of $\Z/n\Z$ such
that the cardinality of the smallest color class is
$m$. Among other results, the authors
in~\cite{jungic} proved that if the smaller class in a $3$--coloring
of the cyclic group $\Z/n\Z$ has size greater than $n/6$, then there
exists a rainbow $AP(3)$. For $n$ divisible by $6$ this condition
is tight, but for other values of $n$ it is possible to obtain
better bounds.

\begin{theorem}[Jungi\'{c} et al.~\cite{jungic}]\label{thm:n/q} Let $n$ be not a power of
$2$, $q$ be the smallest prime factor of $n$, and $r$ be the
smallest odd prime factor of $n$, then:
$$\left\lfloor \frac{n}{2r} \right\rfloor \le m(n) \le min (\frac{n}{6},
\frac{n}{q})$$
\end{theorem}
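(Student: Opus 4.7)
The bounds split into three claims: the construction giving the lower bound $\lfloor n/(2r)\rfloor$, the global density bound $n/6$, and the small--prime--factor bound $n/q$. I would address them in that order.

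\textbf{Lower bound.} My construction uses the unique subgroup $H\le\Z/n\Z$ of index $r$, of order $n/r$, together with the fact that $r$ is odd. I color the elements of $H$ using only colors $1$ and $2$, split as evenly as possible into parts of sizes $\lfloor n/(2r)\rfloor$ and $\lceil n/(2r)\rceil$, and I color every element of $\Z/n\Z\setminus H$ with color $3$. To check rainbow--freeness, take a nontrivial $3$--term AP $(a,a+d,a+2d)$. If $d\in H$, all three terms lie in one coset of $H$ and receive at most two colors. If $d\notin H$, I claim at most one of the three terms lies in $H$: any two terms in $H$ whose difference is $d$ or $2d$ would force $d\in H$, because $|G/H|=r$ is odd and hence $2d\in H\Rightarrow d\in H$. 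Thus the AP uses at most one color from $\{1,2\}$ together with color $3$ and is not rainbow. When $n>r$, the class sizes are $\lfloor n/(2r)\rfloor,\lceil n/(2r)\rceil, n-n/r$, whose minimum is $\lfloor n/(2r)\rfloor$.

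\textbf{Upper bound $n/6$.} I would invoke directly the density theorem of \cite{jungic}: any $3$--coloring of $\Z/n\Z$ whose smallest class exceeds $n/6$ contains a rainbow $3$--AP.

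\textbf{Upper bound $n/q$.} This bound is only informative when $q\ge 7$, so I may assume $n$ is odd and coprime to $3$ and $5$, making doubling a bijection of $\Z/n\Z$. Suppose for contradiction every class $C_i$ satisfies $|C_i|>n/q$. Rainbow--freeness reads $(C_j+C_k)\cap 2C_i=\emptyset$ for $\{i,j,k\}=\{1,2,3\}$, so $|C_j+C_k|\le n-|C_i|<n-n/q$; in particular $C_1+C_3\ne\Z/n\Z$. Kneser's theorem supplies a proper subgroup $K$ (the stabilizer of $C_1+C_3$) with $|C_1+C_3|\ge|C_1+K|+|C_3+K|-|K|$. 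Since the smallest prime factor of $n$ is $q$, every proper subgroup of $\Z/n\Z$ has order at most $n/q$, so $|K|\le n/q<|C_i|$; because $|C_i+K|$ is a multiple of $|K|$ strictly exceeding $|K|$, we get $|C_i+K|\ge 2|K|$, and hence $|C_1+C_3|\ge 3|K|$.

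The main obstacle is converting these sumset inequalities into a clean contradiction: the single relation $3|K|\le|C_1+C_3|<n-n/q$ permits $|K|=n/q$ for moderately large $q$. My plan is to apply Kneser symmetrically to all three pairwise sumsets $C_i+C_j$, show the stabilizers share a common proper subgroup $K'$, conclude each $C_i$ is a union of $K'$--cosets, and then combine the resulting divisibility $|K'|\mid|C_i|$ with the already--established bound $\min_i|C_i|\le n/6$ to force $\min_i|C_i|\le|K'|\le n/q$, contradicting the assumption.
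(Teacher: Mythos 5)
The paper gives no proof of this statement: Theorem~\ref{thm:n/q} is quoted from Jungi\'c et al.~\cite{jungic} and used as a black box, so there is no internal argument to compare yours against. On its own merits, your lower--bound construction is correct and is the natural one: with $H$ the subgroup of index $r$, the fact that $r$ is odd forces the three residues $\bar a,\ \bar a+\bar d,\ \bar a+2\bar d$ to be pairwise distinct in $\Z/r\Z$ whenever $\bar d\neq 0$, so at most one term of the progression meets $H$ and the progression sees at most two colors. For the $n/6$ bound you simply cite the density theorem of \cite{jungic}; that is defensible only if one treats that statement as a separate prior result rather than part of what is to be proved.

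The genuine gap is the upper bound $m(n)\le n/q$, where your argument stops at a plan that does not close. Concretely: (a) the Kneser stabilizers of the three sumsets $C_i+C_j$ need not share a common nontrivial subgroup --- in $\Z/n\Z$ two nontrivial subgroups intersect trivially whenever their orders are coprime, and nothing you have derived excludes this; (b) Kneser's theorem yields $|C_1+C_3|\ge |C_1+K|+|C_3+K|-|K|$ but does not make $C_1$ or $C_3$ itself $K$--periodic (only the saturations $C_i+K$, and in the equality case the sumset, are unions of $K$--cosets), so the divisibility $|K'|\mid |C_i|$ is unavailable; and (c) even granting both, the final inference runs backwards: from $|K'|\mid |C_i|$ and $|C_i|\le n/6$ one gets $|C_i|\ge |K'|$, not $|C_i|\le |K'|$, and a class of size $2|K'|>n/q$ is consistent with everything you established, so no contradiction is reached. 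Your preliminary inequality $|C_1+C_3|\ge 3|K|$ is too crude; a correct proof has to exploit the structure of the equality and near--equality cases (Kemperman--type theorems, as this paper does for its main result) or argue directly with the subgroup of index $q$.
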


\textcolor{black}{Motivated by the above result, Jungi\'c, Ne\v set\v ril and Radoi\v ci\'c~\cite{jnr}
mention that ``Computing the exact value of $m(n)$ remains a challenge'' and} they formulate
 the following conjecture.

\begin{conjecture}[Jungi\'c, Ne\v set\v ril and Radoi\v ci\'c~\cite{jnr}]\label{cjt}
Let $n$ be an integer which is not a power of $2$. Let $p$ denote
the smallest odd prime factor of $n$ in ${\mathcal P}_0$ and let $q$
be the smallest odd prime factor of $n$ in ${\mathcal P}_1$. Then
the largest cardinality of the smallest color class in a
rainbow--free $3$--coloring of the cyclic group $\Z/n\Z$ satisfies
$$m(n)=\left\lfloor \frac{n}{\min \{2p,q\}}\right\rfloor.$$
\end{conjecture}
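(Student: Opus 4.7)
The plan is to deduce the conjecture from the structural
characterization of rainbow--free $3$--colorings of abelian groups
that is the main result of the paper. Such a characterization should
assert that, up to a short list of small exceptions, every
rainbow--free $3$--coloring of an abelian group $G$ factors through
a proper quotient $G\to G/H$; equivalently, each color class is a
union of cosets of a non--trivial subgroup $H\le G$. Granted this,
the rainbow--free $3$--colorings of $\Z/n\Z$ correspond (modulo the
exceptional list) to pairs $(H,c)$ where $H\le \Z/n\Z$ and $c$ is a
rainbow--free $3$--coloring of $(\Z/n\Z)/H\cong \Z/d\Z$ with
$d=n/|H|$; in such a pull--back the smallest color class has size
$|H|\cdot m(d)$.

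For the \emph{lower bound} I would exhibit two pull--back
constructions and take the better one. If $q\mid n$ with
$q\in\mathcal{P}_1$, then Theorem~\ref{thm:m=0} supplies a
rainbow--free $3$--coloring of $\Z/q\Z$ with a singleton color class;
pulling it back through $\Z/n\Z\to\Z/q\Z$ gives a rainbow--free
$3$--coloring of $\Z/n\Z$ with smallest class of size $n/q$. If
$p\mid n$ with $p\in\mathcal{P}_0$ and $2p\mid n$, the same trick
applied to a rainbow--free $3$--coloring of $\Z/(2p)\Z$, which exists
by Theorem~\ref{thm:m=0} since $2p$ is neither a power of $2$ nor a
prime, yields smallest class of size $n/(2p)$. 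When $n$ is odd with
smallest $\mathcal{P}_0$--prime factor $p$, no quotient of order
exactly $2p$ is available, and here one must give a direct
construction on $\Z/n\Z$ achieving $\lfloor n/(2p)\rfloor$: starting
from the unique subgroup $K\le\Z/n\Z$ of index $p$, carefully split
one of its cosets into an exceptional subset of size
$\lfloor n/(2p)\rfloor$ and its complement, matching the shape of a
rainbow--free $3$--coloring on a suitable auxiliary cyclic group of
order $2p$.

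For the \emph{upper bound} I would apply the structural theorem to an
arbitrary rainbow--free $3$--coloring $c$ of $\Z/n\Z$, producing a
non--trivial subgroup $H$ on whose cosets $c$ is constant together
with a rainbow--free $3$--coloring $\ov{c}$ of
$(\Z/n\Z)/H\cong \Z/d\Z$. By Theorem~\ref{thm:m=0}, $d$ is neither
$1$, a power of $2$, nor a prime in $\mathcal{P}_0$, so the smallest
admissible $d$ dividing $n$ is either the smallest $\mathcal{P}_1$
prime factor $q$ of $n$ or a divisor of the form $2p$ with
$p\in\mathcal{P}_0$. An induction on $d$, using the obvious bound
$m(d)\le d/\min\{2p_d,q_d\}$ (the statement of the conjecture for
$d<n$) and the divisibility relations between $d$, $q$ and $2p$, then
gives $|H|\cdot m(d)\le \lfloor n/\min\{2p,q\}\rfloor$, matching the
lower bound.

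The \emph{main obstacle} is establishing the structural theorem
itself; once it is in hand the deduction becomes largely
bookkeeping. The most delicate point in the deduction is the
odd--$n$ case of the $2p$ construction, where a quotient of order
$2p$ does not exist and the coloring must be built by hand;
verifying that this construction is rainbow--free, and that the
induction in the upper bound closes on the correct minimal cases
(in particular that the exceptional list of the structural theorem
never produces a larger smallest class than the two pull--back
constructions), will require the most care.
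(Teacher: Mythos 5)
Your deduction rests on a form of the structural theorem that is not what the paper (or the truth) provides, and the discrepancy is fatal to your upper bound. You assume that, up to ``a short list of small exceptions'', a rainbow--free $3$--coloring of $\Z/n\Z$ factors through a proper quotient, i.e.\ every color class is a union of $H$--cosets. The actual characterization (Theorem~\ref{thm:main} for odd order, Theorem~\ref{thm:mainodd} for even cyclic order) is weaker and more subtle: there is a proper subgroup $H$ such that \emph{one} color class, say $A$, lies inside (a coset of) $H$, the other two colors are $H$--periodic \emph{outside} $H$, but $H$ itself may carry two or even three colors, with an arbitrary rainbow--free coloring inside it. The colorings in which $H$ is bichromatic and $G\setminus H$ is monochromatic do not factor through any quotient (the two classes partition $H$ essentially arbitrarily), they form an infinite family rather than a short exceptional list, and they are precisely the extremal examples attaining $\lfloor n/(2p)\rfloor$ when $2p<q$. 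Consequently your upper--bound induction ``$|H|\cdot m(d)\le\cdots$'' never sees these colorings and cannot close. The paper's actual upper--bound argument is different: from $A\subseteq H$ one gets $\min\{|A|,|B|,|C|\}\le|H|$, which suffices unless $H$ has index $p\in\mathcal{P}_0$; in that residual case one observes that the induced coloring of $G/H\cong\Z/p\Z$ cannot have three nonempty classes (Proposition~\ref{prop:prime} / Theorem~\ref{thm:m=0}), so $G\setminus H$ is monochromatic, $H$ carries two colors, and the smaller of these has size at most $\lfloor|H|/2\rfloor=\lfloor n/(2p)\rfloor$.

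Your lower bound is closer to correct but also reflects the same misconception. For the $\lfloor n/(2p)\rfloor$ construction there is no need for an ``auxiliary cyclic group of order $2p$'' nor any careful matching: take the subgroup $H$ of index $p$, partition $H$ arbitrarily into two nonempty classes with the smaller of size $\lfloor n/(2p)\rfloor$, and color $G\setminus H$ with the third color; since $n$ is odd (or since one takes $H\supseteq$ the $2$--part in the even case), any $3$--AP meeting $G\setminus H$ has two terms in the same $H$--coset, so the coloring is rainbow--free. The $n/q$ construction by pull--back from $\Z/q\Z$ is fine. Finally, note that the even case requires its own structural statement (Theorem~\ref{thm:mainodd}) proved by a separate induction on the $2$--adic valuation; it is not covered by the odd--order theorem, and your sketch does not account for this.
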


In this paper we give a  structural \textcolor{black}{characterization} of
$3$--colorings of finite abelian groups of odd order \textcolor{black}{with no rainbow $3$--term arithmetic progressions.} \textcolor{black}{This characterization
provides a proof of Conjecture \ref{cjt} for general abelian groups of odd order. Combined with
the study of the even case, we complete the proof of Conjecture \ref{cjt} for {\it cyclic} groups
of even order as well.} Note  that   Conjecture~\ref{cjt}  does not hold
for general abelian groups of even order as illustrated by the
following counterexample. Let \textcolor{black}{$G:=H\oplus (\Z/2\Z\oplus \Z/2\Z)$}
where $|H|$ is not a power of $2$. Consider the following
$3$--coloring of $G$: let the subgroup $H$ be colored by $A$, color
one of the three remaining $H$--cosets of $G$ by $B$ and the
two cosets left by $C$. This coloring |textcolor{black}{has no rainbow $3$--term arithmetic progressions, since such a
progression (a triple $(x,y,z)$ such that $x-2y+z=0$) must have two of its elements in the
same $H$--coset.}  However the smaller color class has cardinality
$|H|= |G|/4$ which can be arbitrarily larger than $\min
\{\frac{|G|}{2p},\frac{|G|}{q}\}$ according to the choice  of $H$.

Our main result, , Theorem~\ref{thm:main} below,
identifies the three possible kinds of rainbow-free
colorings of an abelian group $G$ of odd order  which can be
described as follows. There is a proper subgroup $H$ of $G$ such
that, either the coloring  of $G$ is obtained by
lifting a rainbow--free coloring with a color class of size one from
 the quotient group $G/H$, or there is one coset of
$H$ which is bichromatic and $G\setminus H$ is monochromatic, or a
combination of the two possibilities above.

\textcolor{black}{In order to state the main result, let us introduce some notation.
Let $G$ be a finite abelian group. Recall that the Minkowski sum of two nonempty
subsets $X,Y\subset G$ is defined as
$$
X+Y=\{ x+y: x\in X, y\in Y\}.
$$
The \emph{period} (or stabilizer) of a subset $S\subseteq G$, denoted by $P (S)$, is
the subgroup of $G$ defined by:}

$$P (S)=\{g\in G:S+g=S\}$$

Thus, $S$ is a union of cosets of $P(S)$, and $P(S)$ lies above any
subgroup of $G$ such that $S$ is a union of cosets. We say that a
set $S$ is $H$--{\it periodic}, where $H$ is a subgroup of $G$, if
$S+H=S$, and $S$ is {\em periodic} if $P(S)$ is a nontrivial
subgroup of $G$ (i.e. $P(S)\neq \{0\}$). \textcolor{black}{ If $P(S)=\{ 0\}$ we say that $S$ is {\it aperiodic}.}

\textcolor{black}{For a  subset $X\subset G$ we denote by $2\cdot X=\{ 2x:\; x\in X\}$ and $-X=\{ -x:\; x\in X\}$.}

\textcolor{black}{A $3$--term arithmetic progression is an ordered triple $(x,y,z)$ with $x,y,z\in G$ satisfying the equation
$x+y=2z$. In the present setting we say that a $3$--coloring of the elements of $G$ is \emph{rainbow--free} if there are no rainbow $3$--term arithmetic progressions. Note that the property of being rainbow--free is
invariant by translations:   $c$ is a
rainbow--free coloring of $G$ if and only if, for each fixed $g\in G$, the
coloring  $c'(x):=c(x+g)$ is also rainbow--free. We will often use
this remark without explicit reference. We  identify a $3$--coloring with the partition of $G$ into
its three color classes which we denote by $\{A, B, C\}$. }

\begin{theorem}\label{thm:main} Let $G$ be a finite abelian
group of odd order $n$ and let $c$ be a $3$--coloring of $G$ with
non--empty color classes $A,B,C$. Then $c$ is rainbow--free if and
only if, up to translation, there is a proper subgroup $H<G$ and a
color class, say $A$, such that \textcolor{black}{the following three conditions hold:}
\begin{enumerate}
\item[{\rm (i)}] $A\subseteq H$, and the $3$--coloring induced in $H$ is
rainbow--free,
\item[{\rm (ii)}] \textcolor{black}{both $\widetilde{B}=B\setminus H$ and $\widetilde{C}=C\setminus H$ are
$H$--periodic sets, and}
\item[{\rm (iii)}] $\widetilde{B}=-\widetilde{B}=2\cdot\widetilde{B}$ and $\widetilde{C}=-\widetilde{C}=2\cdot\widetilde{C}$.
\end{enumerate}
\end{theorem}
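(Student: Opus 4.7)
The proof splits into the two implications, with sufficiency being a direct verification and necessity being the substantive direction built on sumset inequalities together with Kneser's theorem.

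\textbf{Sufficiency.} Assuming (i)--(iii) hold, I would show that no rainbow $3$--AP $(x,y,z)$ with $x+y=2z$ can exist. If $\{x,y,z\}\subseteq H$, then such an AP would be a rainbow AP of the coloring induced on $H$, contradicting (i). Otherwise at least one element lies outside $H$. Since $A\subseteq H$, the $A$--colored vertex of the AP, if present, lies in $H$. A short case analysis then closes each subcase. For example, if $z\in A$, $x\in\widetilde B$ and $y\in\widetilde C$, then $2z\in H$ (here I use that $|G|$ is odd, so doubling preserves cosets of $H$), whence $x+y\in H$ and $y\in -x+H$; by (iii) $-x\in\widetilde B$, and by (ii) the whole coset $-x+H$ lies in $\widetilde B$, contradicting $y\in\widetilde C$. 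If instead $x\in A$, $y\in\widetilde B$, $z\in\widetilde C$, then $y=2z-x$ lies in the coset $2z+H$, which by $2\cdot\widetilde C=\widetilde C$ and (ii) is contained in $\widetilde C$, again a contradiction. The remaining cases are symmetric.

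\textbf{Necessity: setup.} Suppose $c$ is rainbow-free. The defining forbidden triple $(x,y,z)$ with $x+y=2z$ yields three dual sumset constraints, namely $(A+B)\cap 2\cdot C=\emptyset$, $(A+C)\cap 2\cdot B=\emptyset$ and $(B+C)\cap 2\cdot A=\emptyset$. Because $|G|$ is odd, the doubling map is a bijection and $|2\cdot X|=|X|$, so
\[
|A+B|\le |A|+|B|,\qquad |A+C|\le |A|+|C|,\qquad |B+C|\le |B|+|C|.
\]
Up to permuting the labels I assume $|A|\le|B|\le|C|$ and translate so that $0$ lies in a convenient class. Kneser's theorem applied to $A+B$ gives
\[
|A+B|\ge |A+H|+|B+H|-|H|,\qquad H:=P(A+B),
\]
and the plan is to analyse the structure according to whether $H$ is trivial.

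\textbf{Main case: $H$ non-trivial.} Here $A+B$ is $H$-periodic, and the program is to show that, after translation, $A\subseteq H$, that each coset of $H$ outside $H$ is entirely $B$-colored or entirely $C$-colored (yielding (ii)), and that the two resulting sets $\widetilde B,\widetilde C$ satisfy the symmetry and doubling-closure (iii). Condition (iii) will come from specific AP families: for any $a\in A\subseteq H$ and $x\in\widetilde B$, the AP $(a-x,\,a,\,a+x)$ and the AP $(a,\,a+x,\,a+2x)$ express respectively $-x$ and $2x$ as being forced into the same coset-colour class as $x$, once (ii) is known. The inherited coloring on $H$ is automatically rainbow-free, giving (i), so the theorem applies recursively and the overall argument is naturally set up as induction on $|G|$.

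\textbf{Degenerate case: $H$ trivial.} Then $|A+B|\in\{|A|+|B|-1,|A|+|B|\}$, and a Vosper/Freiman-type structural theorem forces $A$ and $B$ to be arithmetic progressions (or nearly so) with a common difference. Combined with the analogous consequences of the inequalities for $A+C$ and $B+C$, this either produces a configuration already of the form described in the theorem (with the role of $H$ being played by a larger stabilizer such as $P(A+C)$ or $P(B+C)$) or yields a direct contradiction using the odd-order hypothesis.

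The main obstacle I anticipate lies in the non-trivial-stabilizer case: proving that some color class actually sits inside $H$ (rather than $A+B$ being periodic while each class straddles several cosets), and that the cosets of $H$ in $G\setminus H$ partition cleanly between $B$ and $C$. I expect this to require cross-referencing the Kneser inequalities for all three pairs, together with a careful use of the oddness of $|G|$ (so that doubling is an automorphism and preserves the coset structure). Once this partition is in hand, (iii) is a comparatively clean consequence of a handful of AP substitutions, and (i) follows by restriction.
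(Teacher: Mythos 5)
Your sufficiency argument is correct and essentially the same as the paper's Proposition~\ref{prop:ifmain} (the paper phrases it as lifting to the quotient $G/H$, you do the coset bookkeeping by hand; both work). Likewise, the three inequalities $|X+Y|\le |X|+|Y|$ and the appeal to Kneser's theorem are exactly how the paper opens the necessity direction.

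The necessity direction, however, has genuine gaps: it is a plan whose two main branches would not go through as described. First, in your ``main case'' you set $H:=P(A+B)$ and aim to show $A\subseteq H$ with $G\setminus H$ splitting into monochromatic $H$--cosets; but the subgroup in the theorem is in general \emph{not} $P(A+B)$. When $A+B$ is $K$--periodic with $|A+B|\le |A|+|B|$, Kneser only yields that $A$ and $B$ are $K$--\emph{quasiperiodic} (each a union of $K$--cosets plus at most one partial coset), and neither class need lie in a single $K$--coset. Converting quasiperiodicity of two classes into the conclusion of the theorem is the heart of the paper (the $3$--cosets Lemma~\ref{lem:3cosets} together with Lemmas~\ref{lem:onecoset}, \ref{lem:qp} and \ref{lem:aqp}), and it uses the induction hypothesis applied to a quotient coloring, not just the AP substitutions you describe. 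Second, your ``degenerate case'' ($P(A+B)$ trivial) does not force $A$ and $B$ to be (near) arithmetic progressions: Kemperman's theorem for $|A+B|=|A|+|B|-1$ aperiodic, and Grynkiewicz's extension for $|A+B|=|A|+|B|$, each contain a quasiperiodic alternative as well as small--cardinality exceptions ($\min\{|A|,|B|\}\le 2$, $|A|=|B|=3$) that need separate ad hoc arguments; a Vosper--type dichotomy ``progression or nothing'' is not available here. Third, your induction has no base case: for $|G|=p$ prime the only admissible $H$ is $\{0\}$, so one must prove that the smallest color class of a rainbow--free $3$--coloring of $\Z/p\Z$ is a singleton; this is the nontrivial bound $m(p)\le 1$ of Jungi\'c et al.\ (Theorems~\ref{thm:m=0} and~\ref{thm:n/q}) and does not follow from the sumset inequalities alone.
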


The  `if' part of Theorem~\ref{thm:main} can be easily checked and
its proof is detailed  in Section~\ref{sec:proof}, Proposition~\ref{prop:ifmain}.
\textcolor{black}{For the ``only if" part we use results by Kneser \cite{kneser}, Kemperman \cite{kemperman} and
Grynkiewicz \cite{grynkiewicz} which give the structure of sets with small sumset in an abelian group.}

\textcolor{black}{The paper is organized as follows. In Section \ref{sec:prel} we recall some
results in Additive Combinatorics and prove a simple Lemma which will be used in the remainder of the paper.
Section~\ref{sec:small} deals with colorings in which one color class is either small or an arithmetic progression.
In Section \ref{sec:local} we consider colorings with structured color classes. The proof of   Theorem \ref{thm:main}
and the proof of Conjecture \ref{cjt} for abelian groups of odd order is contained in Section~\ref{sec:proof}.  In
Section~\ref{sec:even} we give a structural characterization
for the case of cyclic groups of even order
(Theorem~\ref{thm:mainodd}) which is analogous to Theorem
\ref{thm:main}. With this version   of the characterization one can
complete the proof of Conjecture~\ref{cjt}   for cyclic groups of
even order. }

\section{Some tools from Additive Combinatorics}
\label{sec:prel}

We shall use the following
well--known result of Kneser (see e.g. \cite[Theorem 5.5]{tao})

\begin{theorem}[Kneser]\label{thm:knes}
Let $(A,B)$ be a pair of finite non-empty subsets of an abelian
group $G$. Then, letting $H:=P(A+B)$, we have:
$$|A+B|\ge |A+H|+|B+H|-|H|.$$
Moreover, if $|A+B|\le |A|+|B|-1$  then we have equality.
\end{theorem}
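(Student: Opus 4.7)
The plan is to prove Kneser's inequality by induction on $|B|$, using the Dyson $e$-transform as the key tool. For the base case $|B|=1$, the sumset $A+B$ is a single translate of $A$, so $H:=P(A+B)=P(A)$ and $A$ is a union of $H$-cosets. Then $|A+H|=|A|$ and $|B+H|=|H|$, and both sides of the inequality equal $|A|$.

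For the inductive step, given $e\in G$, define the Dyson transforms
$$A_e := A\cup(B-e), \qquad B_e := B\cap(A+e),$$
for which a direct check yields $A_e+B_e\subseteq A+B$ and $|A_e|+|B_e|=|A|+|B|$; moreover $B_e\neq\emptyset$ precisely when $e\in B-A$. Two subcases arise. In the first subcase, some $e\in B-A$ gives $0<|B_e|<|B|$. Then the inductive hypothesis applied to $(A_e,B_e)$, together with the inclusion $A_e+B_e\subseteq A+B$, yields the desired bound. In the second subcase, $|B_e|=|B|$ for every $e\in B-A$. This forces $B\subseteq A+e$ for all such $e$, equivalently $A+(B-B)\subseteq A$; hence $A$ is invariant under the subgroup $\langle B-B\rangle$, so $A+B=A+b_0$ for any $b_0\in B$. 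Consequently $|A+B|=|A|$, $B$ lies in a single $H$-coset, and the inequality holds with equality.

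The main obstacle lies in the first subcase: the stabilizer $P(A_e+B_e)$ need not coincide with $H$, so the inductive bound must be transferred carefully via the quotient $\pi\colon G\to G/H$. This reduction is natural because $\pi(A+B)$ has trivial stabilizer in $G/H$, so Kneser's theorem reduces to the statement that sumsets with trivial stabilizer satisfy $|X+Y|\geq |X|+|Y|-1$, after which multiplying by $|H|$ recovers the full bound. For the equality claim, suppose $|A+B|\leq |A|+|B|-1$. Working in $G/H$ one has $|\pi(A)+\pi(B)|\geq |\pi(A)|+|\pi(B)|-1$; a strict inequality here would yield $|A+B|\geq |A+H|+|B+H|\geq |A|+|B|$, contradicting the hypothesis. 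Therefore equality holds in the quotient, and lifting back by a factor of $|H|$ gives $|A+B|=|A+H|+|B+H|-|H|$, as claimed.
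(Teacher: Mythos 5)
The paper does not actually prove this statement: Kneser's theorem is quoted as a classical result with a pointer to the literature (Tao--Vu, Theorem~5.5), so there is no internal proof to compare against. Judged on its own, your outline follows the standard strategy --- induction on $|B|$ via the Dyson $e$-transform --- and the base case, the bookkeeping identities $A_e+B_e\subseteq A+B$, $|A_e|+|B_e|=|A|+|B|$, and the ``stuck'' subcase ($|B_e|=|B|$ for every $e\in B-A$, forcing $A+\langle B-B\rangle=A$) are all correct. But there is a genuine gap at the decisive step.

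In the first subcase you apply the inductive hypothesis to $(A_e,B_e)$ and assert that this ``yields the desired bound.'' It does not, for the reason you yourself flag: the hypothesis gives $|A+B|\ge |A_e+B_e|\ge |A_e|+|B_e|-|H_e|=|A|+|B|-|H_e|$ with $H_e:=P(A_e+B_e)$, and when $H_e$ is a nontrivial subgroup that does not stabilize $A+B$ this is strictly weaker than what is claimed. Your proposed repair --- pass to $G/H$ with $H=P(A+B)$, where the image sumset is aperiodic, and invoke ``sumsets with trivial stabilizer satisfy $|X+Y|\ge |X|+|Y|-1$'' --- is circular: that statement is exactly Kneser's theorem in the aperiodic case, i.e.\ the thing to be proved, and the quotient need not advance the induction on $|B|$ (if $B$ meets each $H$-coset at most once then $|\pi(B)|=|B|$, and if $H$ is trivial the quotient does nothing). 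The real content of Kneser's proof lives precisely here: one must handle the case where every useful transform produces a sumset $A_e+B_e$ with a large stabilizer, typically by counting the elements of $A+B$ outside $A_e+B_e$ coset-by-coset over $H_e$, or by a secondary induction on the order of the group or of the stabilizer. As written, your argument establishes the easy reformulation and the degenerate cases but omits the core of the theorem.
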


\textcolor{black}{It follows from Kneser`s Theorem that, if
$|A+B|\leq|A|+|B|-1$, then either $A+B$ is periodic or
$|A+B|=|A|+|B|-1$. We shall use this remark in the following sections. }

The structure of pairs of sets $(X,Y)$ in an abelian group $G$
verifying $|X+Y|= |X|+|Y|-1$ is given by the Kemperman Structure
Theorem (KST). We shall only use the following simplified version of
Kemperman's theorem   \cite[Theorem~5.1]{kemperman} (see also the
formulations of Lev \cite[Theorem~2]{lev}, Grynkiewicz
\cite[Theorem KST]{grynkiewicz} \textcolor{black}{and Hamidoune \cite{hamidoune}}).
Let $H\neq\{ 0\}$ be a subgroup of
$G$. A set $S\subset G$ is said to be $H$--{\em quasiperiodic} if it
admits a decomposition $S=S_0\cup S_1$, where each of $S_0$ and
$S_1$ can be empty, $S_1$ is a maximal $H$--periodic  subset of $S$
and $S_0$ is (properly)  contained in a single coset of $H$. Note
that every set $S\subset G$ is quasiperiodic with $S_1=\emptyset$
and $H=G$.

\begin{theorem}[Kemperman \cite{kemperman}]\label{thm:kemp}
Let $A$ and $B$ be nonempty subsets of an abelian group $G$
verifying $$|A+B|=|A|+|B|-1\textcolor{black}{\le |G|-2}.$$ If $A+B$ is
aperiodic then one of the following holds:

\begin{enumerate}
\item[{\rm (i)}] $\min\{ |A|,|B|\}=1$.
\item[{\rm (ii)}] Both $A$ and $B$ are arithmetic progressions
with the same common difference.
\item[{\rm (iii)}] Both $A$ and $B$ are $H$--quasiperiodic for some nontrivial proper
subgroup $H<G$.
\end{enumerate}
\end{theorem}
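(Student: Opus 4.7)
The plan is to prove the theorem by induction on $\min\{|A|,|B|\}$, reducing to the case $\min\{|A|,|B|\}=1$ (which is conclusion (i)) via a chain of sumset-preserving transforms, and then lifting the resulting structure back to $(A,B)$. The main tool is the Kemperman $e$-transform: for $e\in G$ put
\[
A^{(e)} := A\cup(B+e), \qquad B^{(e)} := B\cap(A-e).
\]
It satisfies $|A^{(e)}|+|B^{(e)}|=|A|+|B|$ and $A^{(e)}+B^{(e)}\subseteq A+B$. Choosing $e=a-b$ with $a\in A$, $b\in B$ forces $b\in B^{(e)}$, so the new pair is nonempty. Kneser's theorem applied to the transformed pair, combined with the hypothesis that $A+B$ is aperiodic (so the Kneser bound reduces to $|A^{(e)}+B^{(e)}|\geq |A^{(e)}|+|B^{(e)}|-1$), yields $A^{(e)}+B^{(e)}=A+B$; hence the transformed pair again satisfies all the hypotheses of the theorem and falls under the inductive assumption whenever $\min\{|A^{(e)}|,|B^{(e)}|\}<\min\{|A|,|B|\}$.

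The inductive step splits into two sub-cases. \emph{Sub-case A:} some choice $e=a-b$ gives $B\not\subseteq A-e$, hence $|B^{(e)}|<|B|$; induction applies to $(A^{(e)},B^{(e)})$, and one lifts its conclusion back. If the transformed pair lies in case (i), iterated inverse transforms either yield the arithmetic progression structure of case (ii) or uncover a nontrivial period forcing (iii). If it lies in case (ii) with common difference $d$, then since both $A\subseteq A^{(e)}$ and $B+e\subseteq A^{(e)}$ are contained in an arithmetic progression of difference $d$, one verifies directly that $A$ and $B$ are themselves APs with that common difference. If it is $H$-quasiperiodic, the $H$-coset decomposition transports back because $A^{(e)}$ differs from $A$ only by its union with $B+e$, forcing the $H$-periodic/aperiodic partition of $A^{(e)}$ to restrict to the same structure on $A$ and dually on $B$. \emph{Sub-case B:} no $e$-transform strictly reduces $|B|$, meaning $B\subseteq A-e$ for every $e\in A-B$. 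This forces $B-B\subseteq A-A$ and, by symmetry, $A-A=B-B$. Letting $H$ be the subgroup generated by this common difference set, the combined hypotheses $|A+B|\le |G|-2$ and aperiodicity of $A+B$ imply $H$ is a proper nontrivial subgroup; a direct coset count using $|A+B|=|A|+|B|-1$ then shows $A$ and $B$ are both $H$-quasiperiodic, giving conclusion (iii).

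The main obstacle will be the lifting step in Sub-case A. One must carefully verify that the structural conclusion of $(A^{(e)},B^{(e)})$ survives inversion of the transform, and in particular, when the base of the induction lands in case (i), decide whether the original pair should lift to conclusion (ii) or conclusion (iii). This requires tracking, for each transform in the chain, which $H$-coset contains the single-coset ``remainder'' of the quasiperiodic decomposition, and showing that the subgroup produced at the end of the chain depends only on $(A,B)$ and not on the chain itself. Throughout, the hypothesis $|A+B|\le |G|-2$ is used to rule out the degenerate cases $H=G$ and $A+B=G$, both of which would spoil either the quasiperiodic conclusion or the aperiodicity assumption. The bookkeeping of these cases is where Kemperman's proof becomes technically involved, though each individual verification reduces to elementary coset arithmetic once the correct subgroup is identified.
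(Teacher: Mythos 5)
The paper does not prove this statement: it is quoted as a known external result of Kemperman \cite{kemperman} (with alternative formulations credited to Lev, Grynkiewicz and Hamidoune), so there is no internal proof to compare against. Judged on its own, your proposal correctly identifies the Kemperman/Dyson $e$--transform as the engine, but it breaks down at exactly the points where the real proof is hard. First, the claim that Kneser's theorem plus aperiodicity of $A+B$ forces $A^{(e)}+B^{(e)}=A+B$ is unjustified: Kneser's bound for the transformed pair involves the period of $A^{(e)}+B^{(e)}$, not of $A+B$, and a proper subset of an aperiodic set can perfectly well be periodic. The branch in which $A^{(e)}+B^{(e)}$ is periodic and strictly smaller than $A+B$ is precisely where the quasiperiodic structure of conclusion (iii) originates in Kemperman's argument (via a quotient by that period), and your sketch never confronts it. Second, the lifting of case (ii) rests on a false inference: from $A\subseteq A^{(e)}$ with $A^{(e)}$ an arithmetic progression of difference $d$ you conclude that $A$ is such a progression, but a proper subset of an arithmetic progression need not be one, and here $A\subsetneq A^{(e)}$ since $|A^{(e)}|>|A|$; recovering the progression structure of $A$ requires re-using the critical equality $|A+B|=|A|+|B|-1$, not mere containment.

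Third, Sub-case B is analyzed incorrectly. If $B\subseteq A-e$ for every $e=a-b\in A-B$, then $a+(b'-b)\in A$ for all $a\in A$ and $b,b'\in B$, i.e.\ $A+(B-B)\subseteq A$, so $A$ --- and hence $A+B$ --- is $\langle B-B\rangle$--periodic. The aperiodicity hypothesis therefore forces $\langle B-B\rangle=\{0\}$, that is $|B|=1$: this stationary case lands in conclusion (i), not, as you claim, in conclusion (iii) with a ``proper nontrivial'' subgroup (which would contradict the very aperiodicity you assumed). Finally, the closing paragraph acknowledging ``the main obstacle'' defers rather than supplies the heart of the argument: controlling the chain of transforms, the exceptional coset of the quasiperiodic decomposition, and the independence of the resulting subgroup from the chain is the content of Kemperman's long case analysis, and as written your text is a plan rather than a proof.
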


We shall also use the following extension of KST, recently obtained
by Grynkiewicz~\cite{grynkiewicz}, which describes the structure of
pairs of sets $(X,Y)$ in an abelian group $G$ verifying $|X+Y|=
|X|+|Y|$. Again we only need a simplified version of the full
result.

\begin{theorem}[Grynkiewicz~\cite{grynkiewicz}]\label{thm:gry}
Let $A$ and $B$ be nonempty subsets of an abelian group $G$ of odd
order $n$ verifying $$|A+B|=|A|+|B|\le |G|-3.$$ If $A+B$ is
aperiodic then one of the following holds:

\begin{enumerate}
\item[{\rm (i)}] $\min\{|A|,|B|\}=2$ or $|A|=|B|=3$.
\item[{\rm (ii)}] Both $A$ and $B$ are $H$--quasiperiodic for some nontrivial proper
subgroup $H<G$.
\item[{\rm (iii)}]There are $a, b\in G$ such that $|A'+B'|=|A'|+|B'|-1$
where $A'=A\cup \{ a\}$ and $B'=B\cup \{ b\}$.
\end{enumerate}
\end{theorem}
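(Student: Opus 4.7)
The natural strategy is to reduce Grynkiewicz's hypothesis $|A+B|=|A|+|B|$ to Kemperman's hypothesis $|X+Y|=|X|+|Y|-1$ by enlarging $A$ or $B$ with at most two new elements. Conclusion (iii) is, by definition, exactly the assertion that such an extension is available: if $a\notin A$ satisfies $a+B\subseteq A+B$, one may take $b\in B$ and obtain $|A'+B|=|A'|+|B|-1$; the symmetric situation on the $B$ side is analogous; and otherwise a joint two-element extension may fill the ``slack'' of $1$. Hence the plan is: either exhibit an extension (placing us in (iii)), or show that the absence of every possible extension forces (i) or (ii).

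Assume first that a one-sided extension $A'=A\cup\{a\}$ with $A'+B=A+B$ exists. Since $A+B$ is aperiodic and $|A'+B|=|A'|+|B|-1\le |G|-2$, Kemperman's Theorem~\ref{thm:kemp} gives three possibilities for $(A',B)$, which must be traced back to $(A,B)$ by removing $a$. The singleton case forces $|B|=1$ and contradicts $|A|+|B|=|A+B|$, so is excluded. In the arithmetic-progression case, both $A'$ and $B$ are APs with the same common difference, so $A$ is an AP with at most one interior gap; a Kneser-theoretic refinement via Theorem~\ref{thm:knes} together with a symmetric analysis of a second extension on the $B$ side then places $(A,B)$ into (i), (ii), or (iii). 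In the $H$-quasiperiodic case, $B$ is $H$-quasiperiodic, and $A=A'\setminus\{a\}$ remains $H$-quasiperiodic precisely when $a$ lies in the aperiodic coset of $A'$, yielding (ii); when $a$ lies in the periodic piece, removing it fractures the structure and one must replace $(a,b)$ by a different pair witnessing (iii).

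When no one-sided or compatible two-sided extension is available, the rigidity becomes severe: every $a\in G\setminus A$ contributes at least one new element to $(A+B)\cup(a+B)$, and running this inequality over all $a$ together with $|A+B|=|A|+|B|$ forces both $|A|$ and $|B|$ into a narrow range, producing the exceptional configurations $\min\{|A|,|B|\}=2$ or $|A|=|B|=3$ of (i). The main obstacle is the book-keeping in the quasiperiodic subcase just described: when deleting the added element breaks the quasiperiodic structure, one must manufacture a new extension pair witnessing (iii), and proving that this is always possible requires a careful quantitative application of Kneser's theorem to the extended sumset. The odd-order hypothesis is essential to exclude characteristic-$2$ pathologies — for instance, order-$2$ cosets stable under $x\mapsto -x$ would generate spurious small cases — and the bound $|A+B|\le |G|-3$ keeps the problem away from the degenerate regime where $A+B$ nearly fills $G$ and both Kneser's and Kemperman's structure theorems become vacuous.
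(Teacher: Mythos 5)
This statement is not proved in the paper at all: it is quoted (in simplified form) from Grynkiewicz's article \emph{A step beyond Kemperman's Structure Theorem}, and the authors use it as a black box. So there is no internal proof to compare against; the only question is whether your sketch could stand on its own, and it cannot. The logical structure is already off: if you exhibit an element $a$ with $a+B\subseteq A+B$ (or any pair $(a,b)$ with $|A'+B'|=|A'|+|B'|-1$), you are in case (iii) \emph{by definition} and the proof of that branch is finished. Your second paragraph, which feeds the extended pair back into Kemperman's theorem and then tries to ``trace back'' to $(A,B)$, is doing work that is not needed for the conclusion, and in the quasiperiodic subcase you concede that ``one must manufacture a new extension pair witnessing (iii)'' without saying how --- but in that subcase you already had a witnessing pair, so nothing needs to be manufactured.

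The genuine gap is in the complementary branch, which is where the entire content of the theorem lives. You assert that if no extension is available, then a counting argument over all $a\in G\setminus A$ ``forces both $|A|$ and $|B|$ into a narrow range,'' yielding (i). No such argument is given, and the claim is false as stated: conclusion (ii) also sits in this branch. An $H$-quasiperiodic pair with $|A+B|=|A|+|B|$ can have arbitrarily large color classes and need not admit any one- or two-element extension to a Kemperman critical pair, so non-extendability does not force $\min\{|A|,|B|\}=2$ or $|A|=|B|=3$. The observation that each $a\notin A$ adds at least one new element to the sumset gives no upper bound on $|A|$ or $|B|$ whatsoever. Distinguishing the quasiperiodic configurations from the genuinely small exceptional ones is precisely the hard part of Grynkiewicz's proof, which occupies a long technical paper and does not reduce to Kemperman's theorem plus bookkeeping. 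If you need this statement, cite it; if you want to prove it, the case analysis has to start from the structure of $A+B$ itself (via Kneser and the hyperatom/critical-pair machinery), not from the hoped-for existence of an extension.
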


It is well--known that,  if $A$ and $B$ are subsets of a group $G$
such that $|A|+|B|>|G|$ then $A+B=G$.  The following lemma  characterizes the
structure of sets $A,B\subset G$ with $|A|+|B|=|G|$ and $A+B\neq G$.
We include here a short proof for the benefit of the reader.

\begin{lemma}\label{lem:ph}
Let  $A,B$  be  subsets of a finite
 abelian group $G$.
 \textcolor{black}{
 \begin{enumerate}
\item[{\rm (i)}] If $|A|+|B|>|G|$ then $A+B=G$.
\item[{\rm (ii)}] If $|A|+|B|=|G|$ then either $A+B=G$ or there is a subgroup $H$ and $a\in G$ such that both $A$ and $B$ are $H$--periodic and
 $$A+B=G\setminus (a+H).$$
\end{enumerate}}
 \end{lemma}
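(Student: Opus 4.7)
The plan is to handle (i) by a direct pigeonhole argument and (ii) by extracting the structure from Kneser's theorem (Theorem~\ref{thm:knes}).

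For (i), I would fix an arbitrary $g\in G$ and consider the translate $g-B=\{g-b:b\in B\}$, which has cardinality $|B|$. The hypothesis $|A|+|B|>|G|$ then forces $A\cap(g-B)\neq\emptyset$, so there exist $a\in A$ and $b\in B$ with $g=a+b$. Since $g$ is arbitrary, $A+B=G$.

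For (ii), I would assume $A+B\neq G$ and set $H:=P(A+B)$. Two estimates are then compared. On one side, $A+B$ is $H$-periodic (hence a union of $H$-cosets) and properly contained in $G$, so $|A+B|\leq |G|-|H|$. On the other side, Kneser's theorem combined with the trivial bounds $|A+H|\geq |A|$ and $|B+H|\geq |B|$ yields
$$|A+B|\geq |A+H|+|B+H|-|H|\geq |A|+|B|-|H|=|G|-|H|.$$
These two bounds must therefore coincide, which forces equality at every step: $|A+H|=|A|$ and $|B+H|=|B|$, so both $A$ and $B$ are $H$-periodic; and $|A+B|=|G|-|H|$, meaning $A+B$ is exactly the complement of a single coset $a+H$, as claimed.

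The whole argument is essentially one clean application of Kneser's theorem, so no real obstacle arises. The only small point to verify carefully is that an $H$-periodic proper subset of $G$ has cardinality at most $|G|-|H|$, which is immediate from the fact that $G$ is the disjoint union of $|G|/|H|$ cosets of $H$ and $A+B$ is missing at least one of them.
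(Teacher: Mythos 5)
Your proof is correct and follows essentially the same route as the paper: a single application of Kneser's theorem to $H=P(A+B)$, squeezing $|A+B|$ between $|G|-|H|$ from both sides to force $A+H=A$, $B+H=B$, and $A+B=G\setminus(a+H)$. The only cosmetic differences are that you also write out the standard pigeonhole proof of (i), which the paper omits, and that you avoid the paper's preliminary case split on $|A+B|=|G|-1$ by using only the unconditional inequality in Kneser's theorem together with the upper bound $|A+B|\le|G|-|H|$, rather than the equality case.
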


\begin{proof} \textcolor{black}{We only prove (ii).} If $|A+B|=|G|-1$ then the statement holds with $H=\{ 0\}$.
Suppose that $|A+B|\le |G|-2$ and let $H=P(A+B)$ \textcolor{black}{be the period of $A+B$}.  By Kneser's
Theorem $$|G|>|A+B|=|A+H|+|B+H|-|H|\ge |A|+|B|-|H|=|G|-|H|.$$ \textcolor{black}{Since $A+B$ is
$H$--periodic, equality holds in the second inequality.} It follows that   $A+B=G\setminus (a+H)$ for some
$a\in G$ and  that $A+H=A$ and $B+H=B$.\qed
\end{proof}

One of the applications of Lemma \ref{lem:ph} is the following
result which will be often used. Let $H$ be a proper subgroup of
$G$. \textcolor{black}{As usual we denote by $G/H$ the quotient
group.}  If $X$ is a subset of $G$ we write $X/H$
for the image of $X$ in $G/H$ by the natural projection
$\pi:G\rightarrow G/H$. We say that a triple $( X,Y,Z)$ of
$H$--cosets is in arithmetic progression if $(X/H)+(Y/H)=2\cdot
(Z/H)$.  For $X$ an $H$--coset and $U$ a color class of a coloring we
write $X_U:=X\cap U$.

\begin{lemma}[The $3$-cosets Lemma]\label{lem:3cosets} Let $\{ A,B,C\}$ be a
rainbow--free $3$--coloring of an abelian group $G$ with odd order
$n$. Let $H<G$ be a subgroup of $G$ and let $(X,Y,Z)$ be a triple of
$H$--cosets in arithmetic progression.

If each of $X_A$, $Y_B$ and $Z_C$ is non--empty, then
\begin{equation}\label{eq:3cosets}
\max\{ |X_A|+|Y_B|, |X_A|+|Z_C|,|Z_C|+|Y_B|\}\leq |H|.
\end{equation}

\textcolor{black}{In particular, none of the three cosets can be monochromatic.}

Moreover, if equality holds then there is a proper subgroup $K<H$
such that two of the sets $X_A, Y_B, Z_C$ are $K$--periodic (the two
involved in the equality holding) and the third one is contained in
a single coset of $K$.
\end{lemma}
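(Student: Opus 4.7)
The plan is to use the rainbow-free condition directly. Since $|G|$ is odd, multiplication by $2$ is an automorphism of $G$ and of each coset of $H$, and the AP condition on cosets $X/H+Y/H=2\cdot Z/H$ guarantees that for any choice of $z\in Z$ and $x\in X$ the element $y:=2z-x$ automatically lies in $Y$. Thus whether the ordered triple $(x,y,z)$ with $x+y=2z$ is a rainbow AP is controlled \emph{only} by whether $y\in Y_B$.

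\textbf{Step 1 (the three inequalities and no monochromatic coset).} Fix any $z_0\in Z_C$. For every $x\in X_A$ the element $y=2z_0-x$ lies in $Y$, and if additionally $y\in Y_B$, then $(x,y,z_0)$ is a rainbow AP, which is forbidden. Hence $2z_0-X_A\subseteq Y\setminus Y_B$, and since $x\mapsto 2z_0-x$ is injective this gives $|X_A|+|Y_B|\le|H|$. Running the same argument with $x_0\in X_A$ fixed and the map $y\mapsto(x_0+y)/2\in Z$, respectively with $y_0\in Y_B$ fixed and the map $x\mapsto(x+y_0)/2\in Z$ (both well defined because $|G|$ is odd), yields the other two bounds $|X_A|+|Z_C|\le|H|$ and $|Y_B|+|Z_C|\le|H|$, proving \eqref{eq:3cosets}. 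If some coset were monochromatic, say $X$, then the nonemptiness of $X_A$ forces $X_A=X$ and $|X_A|=|H|$, whence $|Y_B|=0$, contradicting the hypothesis; similarly for $Y$ and $Z$.

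\textbf{Step 2 (structure in the equality case).} Suppose $|X_A|+|Y_B|=|H|$; the other two equality cases are handled identically by symmetry. The containment $2z-X_A\subseteq Y\setminus Y_B$ proved above must now be an \emph{equality} for every $z\in Z_C$, so the set $2z-X_A$ is the same for all $z\in Z_C$. Equivalently, $X_A$ is invariant under translation by $2(z-z')$ for all $z,z'\in Z_C$. Define $K:=\langle\,2(z-z'):z,z'\in Z_C\,\rangle$, a subgroup of $H$. Then $X_A$ is $K$-periodic, and therefore so is $Y_B=Y\setminus(2z_0-X_A)$. Because $|H|$ is odd, multiplication by $2$ is an automorphism of $K$, so $2(z-z')\in K$ implies $z-z'\in K$, meaning that $Z_C$ is contained in a single coset of $K$. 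Finally $K\ne H$: otherwise $X_A$ and $Y_B$ would both be full $H$-cosets, giving $|X_A|+|Y_B|=2|H|\ne|H|$, a contradiction.

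There is no substantive obstacle; the two points requiring care are both consequences of $|G|$ being odd. First, it is what makes $y=2z-x$ and $z=(x+y)/2$ land in the expected cosets, so that the rainbow-free hypothesis can be applied. Second, it is what lets us descend from $2(z-z')\in K$ to $z-z'\in K$, which is precisely the step that identifies the coset of $K$ containing $Z_C$.
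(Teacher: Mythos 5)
Your proof is correct, and it diverges from the paper's in an interesting way in the equality case. For the inequalities \eqref{eq:3cosets} the two arguments are essentially the same pigeonhole: the paper notes $X_A+Y_B\subseteq (2\cdot Z)\setminus(2\cdot Z_C)$ and invokes Lemma~\ref{lem:ph}(i), while you fix $z_0\in Z_C$ and use the explicit injection $x\mapsto 2z_0-x$ into $Y\setminus Y_B$; these are the same estimate in different clothing (and your handling of the monochromatic-coset remark is fine). Where you genuinely depart is in the ``moreover'' part: the paper applies Lemma~\ref{lem:ph}(ii), i.e.\ Kneser's Theorem, to the pair $(X_A,Y_B)$ with $|X_A|+|Y_B|=|H|$, obtaining $K=P(X_A+Y_B)$ together with the fact that the missing $K$--coset of $2\cdot Z$ contains $2\cdot Z_C$. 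You instead observe that equality forces $2z-X_A=Y\setminus Y_B$ for \emph{every} $z\in Z_C$, hence $X_A$ is stabilized by all $2(z-z')$ with $z,z'\in Z_C$, and you take $K$ to be the subgroup these elements generate; periodicity of $Y_B$ and the fact that $Z_C$ lies in one $K$--coset (using that doubling is an automorphism since $n$ is odd, exactly as the paper implicitly needs when passing from $2\cdot Z_C$ to $Z_C$) then come for free, and $K\neq H$ by a cardinality count. Your construction is more elementary and self-contained, avoiding Kneser entirely; the paper's is shorter given that Lemma~\ref{lem:ph} is already on the table, and it produces the canonical period $P(X_A+Y_B)$, whereas your $K$ may a priori be a smaller subgroup --- but the lemma only asserts existence of some such $K$, so this is immaterial. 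The appeal to symmetry for the other two equality cases requires the minor adaptation that the relevant subgroup is generated by differences of $Y_B$ (without the factor $2$) when the equality involves $X_A$ and $Z_C$, but this is routine.
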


\begin{proof}
Since the coloring is rainbow--free \textcolor{black}{and the three cosets are in arithmetic progression} we have $$X_A+Y_B\subseteq
(2\cdot Z)\setminus (2\cdot Z_C).$$ Hence  $|X_A+Y_B|<|H|$ which, \textcolor{black}{by Lemma \ref{lem:ph} (i)},
implies  $|X_A|+|Y_B|\leq |H|$. Similarly $X_A-(2\cdot
Z_C)\varsubsetneq \textcolor{black}{-Y}$ and $Y_B-(2\cdot
Z_C)\varsubsetneq \textcolor{black}{-X}$ imply $|X_A|+|Z_C|\le |H|$
and $|Y_B|+|Z_C|\le |H|$ respectively. This proves the first part of
the statement.

Suppose that $|X_A|+|Y_B|=|H|$. By Lemma~\ref{lem:ph} \textcolor{black}{(ii)} there is a
subgroup $K<H$ such that both $X_A$ and $Y_B$ are $K$--periodic and
$(2\cdot Z)\setminus (X_A+Y_B)$ consists of a single $K$--coset,
which contains $2\cdot Z_C$. A symmetric argument applies if
$|X_A|+|Z_C|=|H|$ or $|Y_B|+|Z_C|=|H|$. \qed
\end{proof}

\section{\textcolor{black}{The prime case}}\label{sec:prime}
\textcolor{black}{
The proof of
Theorem~\ref{thm:main} (in Section ~\ref{sec:proof})
 is by induction on the number of  (not necessarily
distinct) primes dividing $n=|G|$. In this section we prove Theorem~\ref{thm:main}
for groups of prime order, which is a direct consequence of Theorem~\ref{thm:m=0} and Theorem~ref{thm:n/q}.}

\begin{proposition}\label{prop:prime} \textcolor{black}{Let $G$ be a
 group of prime order $p$ and let $c$ be a
$3$--coloring of $G$ with nonempty color classes $A,B,C$. Then $c$
is rainbow--free if and only if, $p\in {\mathcal P}_1$ and, up to
translation, there is a color class, say $A$, such that:}
\begin{enumerate}
\item[{\rm (i)}] \textcolor{black}{$A=\{0\}$,}
\item[{\rm (ii)}] \textcolor{black}{$2\cdot B=B=-B$ and $2\cdot
C=C=-C$.}
\end{enumerate}
\end{proposition}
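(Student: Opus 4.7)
\medskip

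\textbf{Proof proposal.} The plan is to reduce the proposition to the two previously cited results. Theorem~\ref{thm:m=0} tells us exactly when a rainbow--free $3$--coloring of $\Z/p\Z$ with non--empty color classes exists, and Theorem~\ref{thm:n/q} controls the size of the smallest color class. Once these two pieces of information are combined, the characterization should follow by inspecting a couple of small $3$--term arithmetic progressions.

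First I would prove the forward implication. Assume $c$ is rainbow--free with $A,B,C\neq\emptyset$. Since $p$ is an odd prime (the case $p=2$ has no $3$--coloring with non--empty classes), Theorem~\ref{thm:m=0} says that such a coloring exists only when $p$ is not a power of $2$ and not a prime satisfying the remaining conditions of that theorem, that is, $p\in\mathcal{P}_1$. Next, Theorem~\ref{thm:n/q} applied to $n=p$ (so $q=r=p$) gives $m(p)\le p/p=1$, so the smallest class has exactly one element. After translation we may take that class to be $A=\{0\}$, giving (i). To obtain (ii), fix $b\in B$ and consider the $3$--term A.P.'s $(0,2b,b)$ (satisfying $0+2b=2b$) and $(b,-b,0)$ (satisfying $b+(-b)=0=2\cdot 0$). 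Since $p$ is odd and $b\neq 0$, the three elements in each progression are distinct. The first forces $2b\notin C$ and, as $0\in A$ and $b\in B$, also $2b\neq 0$, so $2b\in B$; the second forces $-b\in B$. Hence $2\cdot B\subseteq B$ and $-B\subseteq B$, and because the maps $x\mapsto 2x$ and $x\mapsto -x$ are bijections on $G$, these inclusions are equalities. The analogous argument with $b$ replaced by an arbitrary $c\in C$ yields $2\cdot C=C=-C$.

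Next I would address the converse. Suppose $p\in\mathcal{P}_1$ and $A=\{0\}$, $2\cdot B=B=-B$, $2\cdot C=C=-C$ with $B\cup C=G\setminus\{0\}$. Let $(x,y,z)$ be a $3$--term A.P. in $G$ with pairwise distinct elements. If its image avoids $A$, then it uses only two colors. Otherwise, exactly one of $x,y,z$ equals $0$. If $z=0$ then $x+y=0$, so $y=-x$, and the invariance $-B=B$, $-C=C$ together with $B\cap C=\emptyset$ shows $x$ and $y$ share the same color. If $x=0$ (the case $y=0$ being symmetric), then $y=2z$, and the invariance $2\cdot B=B$, $2\cdot C=C$ shows $y$ and $z$ share the same color. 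In all cases $(x,y,z)$ is not rainbow, so $c$ is rainbow--free.

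There is no real obstacle; the only point worth highlighting is the compatibility check that $p\in\mathcal{P}_1$ is indeed needed so that non--empty $B$ and $C$ satisfying (ii) can coexist. Writing $\langle 2,-1\rangle$ for the subgroup of $(\Z/p\Z)^*$ generated by $2$ and $-1$, condition (ii) forces $B$ and $C$ to be unions of its cosets, so both classes can be non--empty precisely when this subgroup is proper in $(\Z/p\Z)^*$, which is exactly the condition $p\in\mathcal{P}_1$; this makes the equivalence self--consistent and matches the existence statement of Theorem~\ref{thm:m=0}.
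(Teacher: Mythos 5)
Your proof is correct and follows essentially the same route as the paper: Theorem~\ref{thm:m=0} and Theorem~\ref{thm:n/q} give $p\in\mathcal{P}_1$ and a singleton class, and the two progressions through $0$ (namely $\{-x,0,x\}$ and $\{0,x,2x\}$) yield condition (ii), with the converse following because any progression containing $0$ has its other two members equal in color. Your closing remark on the subgroup $\langle 2,-1\rangle$ of $(\Z/p\Z)^*$ is a correct consistency check but is not needed, since the paper (and you) already derive $p\in\mathcal{P}_1$ directly from Theorem~\ref{thm:m=0}.
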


\begin{proof}
\textcolor{black}{Suppose first that $c$ is rainbow--free. Since
$G\simeq \Z/p\Z$,} it follows from
Theorem~\ref{thm:n/q} that $m(p)\leq 1$ and, from
Theorem~\ref{thm:m=0}, that if $p\in {\mathcal P}_0$ then there are
no rainbow--free $3$--coloring of $\Z/p\Z$ with non--empty color
classes. That is, if $c$ is a rainbow--free
$3$--coloring with nonempty color classes, then necessarily $p\in
{\mathcal P}_1$ and there is a color class, say $A$, such that
$|A|=1$. We can assume that $A=\{0\}$ satisfying (i).

 To prove (ii) note that, for every $x\in B$, since
$G$ is a cyclic group of prime order, then both $-x,2x \in \{B,C\}$.
Hence we must have $-x, 2x\in B$, otherwise we get a rainbow $3$--term arithmetic progression
of the form $(-x,0,x)$ or $(0,x,2x)$. Thus $2\cdot B=B=-B$ and
similarly $2\cdot C=C=-C$.

 Reciprocally, if the coloring satisfies (i) and
(ii), then any $3$--term arithmetic progression containing $0$ has
its remaining members in the same color class, thus $c$ is
rainbow--free.\qed
\end{proof}

\section{Small color classes and color classes in arithmetic progresion}\label{sec:small}

Throughout this section $G$ denotes an abelian
group of odd order $n$ and $c$ is a rainbow--free $3$--coloring of
$G$ with non--empty color classes $\{A,B,C\}$.
\textcolor{black}{The coloring is said to be $H$--{\it regular} if, up to transllation, it satisfies conditions
(i), (ii) and (iii) of Theorem~\ref{thm:main} for the subgroup $H<G$.}

We   \textcolor{black}{begin with   the case} when there is a color
class with just one element.

\begin{lemma}\label{lem:A=1} \textcolor{black}{If
$|A|=1$ then the coloring is $H$--regular  with $H=\{ 0\}$.}
\end{lemma}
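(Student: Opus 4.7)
The plan is to normalize by translation and reduce the statement to the argument already used in Proposition~\ref{prop:prime}. After translation we may assume $A=\{0\}$, and we then verify the three conditions of $H$--regularity with $H=\{0\}$. Conditions (i) and (ii) are immediate: $A=\{0\}\subseteq H$; the induced coloring on the singleton $H$ is vacuously rainbow--free (no $3$--AP fits inside a single point); $\widetilde{B}=B$ and $\widetilde{C}=C$ since $0\in A$; and every subset of $G$ is trivially $\{0\}$--periodic.

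The content of the lemma is condition (iii). Fix $x\in B$ with $x\neq 0$. Both $(-x,0,x)$ and $(0,x,2x)$ are $3$--term arithmetic progressions, since $-x+x=2\cdot 0$ and $0+2x=2\cdot x$. In each of these progressions the middle term lies in $A$ and one outer term lies in $B$, so rainbow--freeness forces the remaining term into $A\cup B$. Because $|G|$ is odd, the maps $y\mapsto -y$ and $y\mapsto 2y$ are bijections of $G$ whose only fixed point is $0$, so $-x\neq 0$ and $2x\neq 0$; neither can therefore lie in $A=\{0\}$, and both must lie in $B$. This yields $-B\subseteq B$ and $2\cdot B\subseteq B$, and these inclusions are equalities since both maps are bijections of a finite set. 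The identical argument applied to an arbitrary $y\in C$ gives $C=-C=2\cdot C$, which completes (iii).

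There is no real obstacle here: the lemma is just the odd--order analogue of the observation at the heart of Proposition~\ref{prop:prime}, and the proof works by playing rainbow--freeness off the two canonical $3$--APs through $0$. The oddness of $|G|$ is used only to guarantee that $y\mapsto 2y$ has no nontrivial fixed point, which is what lets us conclude $2x\notin A$ whenever $x\neq 0$.
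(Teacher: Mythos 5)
Your proof is correct and follows essentially the same route as the paper's: translate so that $A=\{0\}$, note that (i) and (ii) are trivial for $H=\{0\}$, and obtain (iii) by applying rainbow--freeness to the progressions $(-x,0,x)$ and $(0,x,2x)$, using oddness of $|G|$ to rule out $2x=0$. The only (harmless) difference is that you spell out the bijectivity argument upgrading $-B\subseteq B$ and $2\cdot B\subseteq B$ to equalities, which the paper leaves implicit.
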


\begin{proof}
\textcolor{black}{We may assume that $A=\{0\}$. By choosing $H=\{0\}$,   parts (i) and (ii) of Theorem \ref{thm:main}
 are satisfied.}

To prove (iii) note that,\textcolor{black}{ since $G$ is a group of
odd order,} for every $x\in B$ then both $-x,2x \in \{B,C\}$. Hence we must have $-x,
2x\in B$, otherwise we get a rainbow $3$--term arithmetic progression of the form $(-x,0,x)$
or $(0,x,2x)$. Thus $2\cdot B=B=-B$ and similarly $2\cdot C=C=-C$.
\qed
\end{proof}

Lemma \ref{lem:A=1} \textcolor{black}{provides the description given in
Theorem \ref{thm:main}} when one of the colors has cardinality one.
The $3$-cosets lemma \textcolor{black}{(Lemma~\ref{lem:3cosets})} can
be used to show the analogous \textcolor{black}{result if}  one of the color classes is
contained in a single coset.

\begin{lemma}\label{lem:onecoset}
\textcolor{black}{If $A$ is contained in a single coset of a proper
subgroup $H'<G$ and $|A|>1$, then the coloring is $H$--regular for some proper subgroup  $H<G$.}
\end{lemma}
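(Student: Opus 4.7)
The plan is to choose $H$ to be the smallest subgroup of $G$ such that $A$ lies in a single coset of $H$ (equivalently, the subgroup generated by $A-A$). This minimum exists and is unique because the family of such subgroups is closed under intersection. Since $A\subseteq H'$, we have $H\le H'$, so $H<G$ is proper; since $|A|>1$, the subgroup $H$ is non-trivial. After translating so that $A\subseteq H$, I plan to verify that the coloring satisfies conditions (i)--(iii) of Theorem~\ref{thm:main} with this $H$.

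The main step will be to show that every coset $\omega\ne H$ of $H$ is monochromatic in $\{B,C\}$. Suppose for contradiction that some $\omega\ne H$ meets both $B$ and $C$. Since $|G/H|$ is odd, there is a unique coset $\omega'\in G/H$ satisfying $2\omega'=\omega$; moreover $\omega'\notin\{H,\omega\}$, and both $\omega$ and $\omega'$ are disjoint from $A$. I first claim that $\omega'$ is bichromatic as well: if $\omega'\subseteq B$ (symmetrically for $C$), then the AP $(H,\omega,\omega')$ realizes all three colors via $a\in A$, $y\in\omega\cap C$, $z\in\omega'\cap B$, and Lemma~\ref{lem:3cosets} forbids any of the three cosets from being monochromatic, contradicting $\omega'\subseteq B$.

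With $\omega$ and $\omega'$ both bichromatic, I apply Lemma~\ref{lem:3cosets} to the AP $(H,\omega,\omega')$ under the two color assignments $(A,B,C)$ and $(A,C,B)$ to obtain
\[
|\omega\cap B|+|\omega'\cap C|\le|H|\quad\text{and}\quad|\omega\cap C|+|\omega'\cap B|\le|H|.
\]
Summing these and using $|\omega\cap B|+|\omega\cap C|=|\omega'\cap B|+|\omega'\cap C|=|H|$ (since $A\cap\omega=A\cap\omega'=\emptyset$), both inequalities must be equalities. The ``moreover'' clause of Lemma~\ref{lem:3cosets} then yields a proper subgroup $K<H$ such that $A$ lies in a single coset of $K$. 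Since $|A|>1$, $K$ cannot be trivial, so $K$ is a proper subgroup of $G$ strictly smaller than $H$ with $A$ in a single $K$-coset, contradicting the minimality of $H$. This establishes (ii).

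Given (ii), condition (i) is immediate, since any rainbow 3-AP in $H$ would also be a rainbow 3-AP in $G$. For (iii), take a coset $\omega\subseteq B$ with $\omega\ne H$, and pick $a\in A$, $b\in\omega$. The 3-APs $(b,2a-b,a)$ and $(a,2b-a,b)$ have midpoints of colors $A$ and $B$ respectively; rainbow-freeness forces $2a-b, 2b-a\in B$. These elements lie in $-\omega$ and $2\omega$, both of which are monochromatic by (ii), so $-\omega,2\omega\subseteq B$. Since negation and doubling are bijective on the odd-order group $G$, this yields $\widetilde{B}=-\widetilde{B}=2\cdot\widetilde{B}$, and analogously for $\widetilde{C}$. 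The main obstacle will be the double application of Lemma~\ref{lem:3cosets} and its equality clause to contradict the minimality of $H$; once that is set up, the remaining verifications are routine.
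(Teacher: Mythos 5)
Your proof is correct and follows essentially the same route as the paper's: translate so that $H$ is the minimal subgroup with $A$ in a single coset, and use the $3$--cosets lemma together with its equality clause on the progression $(H,\omega,\omega')$ to contradict minimality if some coset $\omega\neq H$ were bichromatic. The only cosmetic differences are that you verify condition (iii) directly with two explicit arithmetic progressions instead of passing to $G/H$ and invoking Lemma~\ref{lem:A=1} (whose proof is that very argument), and that you make explicit the preliminary step that $\omega'$ must itself be bichromatic, which the paper leaves implicit.
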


\begin{proof}
We may assume that \textcolor{black}{$0\in A$. Let $H$ be the  the minimal proper subgroup
of $G$ which contains $A$. } Suppose that $Y\neq H$ is an $H$--coset
which intersects the two remaining color classes $B$ and $C$. Let
$Z$ be a third coset such that $(X=H,Y,Z)$ are in arithmetic
progression. Since $A$ does not meet $Y$ and $Z$, we have
$$|Y_B|+|Y_C|+|Z_B|+|Z_C|=2|H|.$$
It follows from
Lemma~\ref{lem:3cosets} that   $|Y_B|+|Z_C|= |H|$ and $|Y_C|+|Z_B|=
|H|$. Moreover, there is a subgroup $K\le H$ such that $X_A=A$ is
contained in a single coset of $K$ and each of $Y_B, Y_C,Z_B,Z_C$ is
$K$--periodic. By the minimality of $H$ we have $K=H$ contradicting
the existence of the bichromatic coset $Y$. \textcolor{black}{Thus
parts (i) and (ii) of Theorem \ref{thm:main} are satisfied.}

\textcolor{black}{Now consider the $3$--coloring of $G/H$ with color
classes   $A'=\{ 0\}$, $B'=\widetilde{B}/H$ and $
C'=\widetilde{C}/H$ where $\widetilde{B}=B\setminus H$ and
$\widetilde{C}=C\setminus H$. Note that, since the original coloring
$\{A,B,C\}$ of $G$ is rainbow--free, then so it is $\{A',B',C'\}$ in
$G/H$.}

\textcolor{black}{If some of $B'$ or $C'$ is an empty set, then part
(iii) of Theorem~\ref{thm:main} is clearly satisfied.}

\textcolor{black}{If both  $B'$ and $C'$ are nonempty sets, then
$\{A',B',C'\}$ is a rainbow--free coloring of $G/H$ with nonempty
color classes and $|A'|=1$. By Lemma~\ref{lem:A=1}}, it follows that
$2\cdot B'=B'=-B'$ and $2\cdot C'=C'=-C'$. Thus part (iii) is also
satisfied for the coloring $\{A,B,C\}$. \qed\end{proof}

We next handle the cases when  there is a class with
two elements or there are two classes with three elements.

\begin{lemma}\label{lem:A=2} \textcolor{black}{If
$|A|=2$ then the coloring is $H$--regular for some $H<G$.}
\end{lemma}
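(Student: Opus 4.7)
My plan is to reduce to Lemma~\ref{lem:onecoset} by producing a proper subgroup of $G$ containing (a translate of) $A$. After translation, write $A=\{0,a\}$ with $a\ne 0$. If the cyclic subgroup $\langle a\rangle$ is proper in $G$, then $A\subseteq\langle a\rangle$ is contained in a single coset of a proper subgroup and Lemma~\ref{lem:onecoset} immediately yields the desired $H$--regular structure. The entire content of the lemma is therefore to exclude the remaining possibility that $a$ generates all of $G$.

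Suppose for contradiction that $G=\langle a\rangle$, so $G\cong\Z/n\Z$ and, after identification, $A=\{0,1\}$. By Lemma~\ref{lem:A=1} I may also assume $|B|,|C|\ge 2$, since otherwise the singleton class can be relabeled as the distinguished one. For any $x\in\Z/n\Z$, the triples $(x,0,-x)$ and $(x,1,2-x)$ are $3$--term arithmetic progressions whose middle term lies in $A$; since the endpoints avoid $A$ under mild restrictions on $x$, the rainbow--free hypothesis forces $c(x)=c(-x)$ for $x\notin\{0,1,-1\}$ and $c(x)=c(2-x)$ for $x\notin\{0,1,2\}$. Composing these two relations yields $c(x)=c(x+2)$ for every $x\notin\{0,1,-1,-2\}$.

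Since $\gcd(2,n)=1$, iterating this equality propagates a single colour along the even chain $c(2)=c(4)=\cdots=c(n-1)$ and along the odd chain $c(3)=c(5)=\cdots=c(n-2)$, while the relation $c(2)=c(-2)=c(n-2)$ bridges the two chains. Hence $c$ is constant on $\{2,\ldots,n-1\}=B\cup C$, contradicting that both $B$ and $C$ are non--empty. Therefore $\langle a\rangle$ is a proper subgroup of $G$, and Lemma~\ref{lem:onecoset} concludes the argument.

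The only delicate point is the bookkeeping of the values where the functional equations fail; using the two relations in tandem leaves no gap in the propagation, which ultimately reflects the fact that the affine group generated by $x\mapsto 2x$ and $x\mapsto 2x-1$ acts transitively on $\Z/n\Z$.
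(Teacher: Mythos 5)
Your proof is correct and follows essentially the same route as the paper: reduce to Lemma~\ref{lem:onecoset} unless $a$ generates $G$, and in that case use the $3$--term progressions centered at the two elements of $A$ to force $G\setminus A$ to be monochromatic, contradicting that both $B$ and $C$ are non--empty. The paper tracks the single chain $c(-a)=c(3a)=c(-3a)=c(5a)=\cdots$ whereas you propagate $c(x)=c(x+2)$ and bridge the two parity chains via $c(x)=c(-x)$, but these are the same relations arranged differently.
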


\begin{proof} By Lemma~\ref{lem:onecoset} we only have to show that
one color is contained in a single coset of a proper subgroup $H<G$.

We may assume that $A=\{0,a\}$. Let us show that $a$ generates a
proper subgroup $H$ of $G$. Suppose on the contrary   that the
cyclic group generated by $a$ is the whole group $G=\langle a
\rangle\cong \Z/n\Z$.

Since $\{-a,a,3a\}$ can not be rainbow, we have $c(-a)=c(3a)$. Since
$\{-3a,0,3a\}$ can not be rainbow we have $c(-3a)=c(3a)$. By
iterating this argument, we have
$$c(-a)=c(3a)=c(-3a)=c(5a)=c(-5a)= ... =c((n-2)a)=c(-(n-2)a),$$
so that the color class of $-a$ has $n-2$ elements. But then the
third one is empty, a contradiction.

Hence \textcolor{black}{$A\subset \langle a \rangle=H<G$} and, by
Lemma~\ref{lem:onecoset}, \textcolor{black}{parts (i), (ii) and (iii)
of} Theorem~\ref{thm:main} \textcolor{black}{are satisfied}.\qed
\end{proof}

\begin{lemma}\label{lem:a=b=3}
 \textcolor{black}{If
$|A|=|B|=3$ and $|A+B|=6$, then the coloring is $H$--regular  for some $H<G$.}
\end{lemma}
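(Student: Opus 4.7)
The plan is to apply Grynkiewicz's Theorem~\ref{thm:gry} to the pair $(A,B)$, since $|A+B|=|A|+|B|=6$ places it exactly in the equality regime of that theorem. I first reduce to the case $|C|\geq 3$: if some color class has size $1$ or $2$, then Lemmas~\ref{lem:A=1} or~\ref{lem:A=2} (after relabeling) give the conclusion directly. Assuming $|C|\geq 3$ yields $|G|=|A|+|B|+|C|\geq 9$, hence $|A+B|=6\leq |G|-3$, which is the size hypothesis needed by Theorem~\ref{thm:gry}.

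Next, I would split according to the period $H^{*}:=P(A+B)$. If $H^{*}\neq\{0\}$, then $|H^{*}|$ is an odd divisor of $|A+B|=6$, so $|H^{*}|=3$. Kneser's Theorem~\ref{thm:knes} gives $|A+H^{*}|+|B+H^{*}|\leq 9$; since each summand is a positive multiple of $3$ bounded below by $|A|=|B|=3$, at least one of them equals $3$, meaning at least one of $A$ or $B$ is a single coset of $H^{*}$, and Lemma~\ref{lem:onecoset} yields the desired $H$-regular structure. If $H^{*}=\{0\}$, I invoke Theorem~\ref{thm:gry}. In its case~(ii), $A$ is $H$-quasiperiodic for some nontrivial proper subgroup $H$ of odd order, and since $|A|=3$ a quick check ($|H|=3$ forces $A$ to equal a single coset; $|H|\geq 5$ forces $A$ to sit strictly inside a coset) places $A$ in a single $H$-coset, so Lemma~\ref{lem:onecoset} concludes. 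In its case~(iii), I obtain $A'=A\cup\{a\}$ and $B'=B\cup\{b\}$ with $|A'+B'|=|A'|+|B'|-1$, and then apply Kemperman's Theorem~\ref{thm:kemp} to $(A',B')$: sub-case~(i) is ruled out by $|A'|,|B'|\geq 3$; sub-case~(iii) lifts quasiperiodicity back to $A,B$ and reduces again to Lemma~\ref{lem:onecoset}; sub-case~(ii) constrains $A$ and $B$ to short arithmetic progressions with a common difference.

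The main obstacle is the residual regime in which Theorem~\ref{thm:gry} yields only its automatic case~(i) on $(A,B)$, together with the arithmetic-progression branch of Kemperman's sub-case~(ii) encountered above. In both residual situations the structural theorems alone do not reveal enough, and I would exploit the identity $2\cdot C=G\setminus(A+B)$, which follows from the rainbow-free disjointness $(A+B)\cap(2\cdot C)=\emptyset$ together with the cardinality equation $|A+B|+|C|=|G|$, and combine it with the 3-cosets Lemma~\ref{lem:3cosets} applied to carefully chosen triples of cosets of a candidate subgroup $H$. The delicate step is to pin down $H$ from the arithmetic of the six elements of $A\cup B$ and to propagate the equality case of Lemma~\ref{lem:3cosets} into $H$-periodicity of $\widetilde{B}=B\setminus H$ and $\widetilde{C}=C\setminus H$, thereby verifying the $H$-regular conditions of Theorem~\ref{thm:main}.
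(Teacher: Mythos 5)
Your outer framework collapses at exactly the point where the lemma's content lives. Theorem~\ref{thm:gry} is a disjunction, and its case (i) reads ``$\min\{|A|,|B|\}=2$ or $|A|=|B|=3$''; since you are precisely in the situation $|A|=|B|=3$, that alternative is automatically satisfied and the theorem yields no information whatsoever --- you are not entitled to assume cases (ii) or (iii) hold, so the branches you build on them are vacuous. (Your preliminary reductions are fine: the reduction to $|C|\ge 3$, and the periodic case $P(A+B)\neq\{0\}$ handled via Kneser and Lemma~\ref{lem:onecoset}, are correct and essentially parallel to what the paper's machinery would give.) What remains after these reductions is the entire substance of the lemma: two aperiodic $3$--element sets with $|A+B|=6$. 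For that core case you offer only a strategy sketch (``exploit $2\cdot C=G\setminus(A+B)$ \dots{} pin down $H$ from the arithmetic of the six elements \dots{} the delicate step is \dots''), which is not a proof.

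The paper closes this gap with a different ingredient: the explicit inverse theorem for pairs of $3$--sets with $|A+B|=6$ (cited to \cite{grynkiewicz}), namely that either $B$ is a translate of $A$, or one set is a $3$--term arithmetic progression and the other is a $4$--term progression with the same difference and one element removed. Each configuration is then eliminated by hand with concrete $3$--term progressions: for $B=A+x$ the progressions $\{-x,0,x\}$ and $\{0,x,2x\}$ force both sets to be progressions with a common difference, contradicting $|A+B|=|A|+|B|$; in the progression case one reduces via Lemma~\ref{lem:onecoset} to $A=\{0,1,2\}$, $B=\{x,x+2,x+3\}$ in $\Z/n\Z$ and exhibits the rainbow triple $\{0,(n-1)/2,(n+1)/2\}$. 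Without this classification (or an equivalent case analysis of the six elements), your argument does not go through.
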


\begin{proof} Let $A=\{0,a,a'\}$. It can be shown (see e.g.
\cite{grynkiewicz}) that  only two possibilities occur if
$|A+B|=|A|+|B|=6$: either $B$ is a transllate of $A$ or one of the
sets, say $A$, is an arithmetic progression, and the second one,
$B$, is an arithmetic progression of length four with the same
difference and with one element removed.

Suppose that $B=A+x=\{ x,x+a,x+a'\}$ for some $x\in G$. Since
$\{-x,0,x\}$ can not be rainbow we have $-x\in A\cup B$. If $-x\in
A$  then $0\in A\cap B$, a contradiction. Thus $-x\in B$ and
$a=-2x$. Since $\{0,x,2x\}$ can not be rainbow we have $2x\in A\cup
B$. If $2x\in B$ then $x\in A\cap B$ (since $A=B-x$). Thus $2x=a$
which implies $B=\{ -x,x,3x\}$ and $A=\{-2x,0,2x\}$ and both sets
are arithmetic progressions with the same difference contradicting
$|A+B|=|A|+|B|$.

Suppose now that $A$ is an arithmetic progression with difference
$d$. If $A$ generates a proper subgroup $H$ of $G$ then the result
follows from Lemma \ref{lem:onecoset}. Hence we may assume that
$A=\{0,1,2\}$ and $G$ is the cyclic group of order $n$. Moreover
$B=\{ x, x+2,x+3\}$ for some $x\in \Z/n\Z\setminus
\{0,1,2,n-3,n-2,n-1\}$. If $\{0,x,-x\}$ is not rainbow, since $-x\in
\{1,2\}$ \textcolor{black}{can not hold} and $n$ is odd, we must have $-x=x+3$. But then
$x=(n-3)/2$ and  $\{0,(n-1)/2, (n+1)/2\}$ is rainbow. \textcolor{black}{This contradiction completes the proof.} \qed
\end{proof}

We next \textcolor{black}{consider}  the case when two color classes are \emph{almost}
progressions. An {\it almost}--progression is an arithmetic
progression with one point removed. Observe that, with this
definition, the class of almost progressions contains the class of
all arithmetic progressions except the ones whose length equals the
order of the cyclic group generated by the difference.

\begin{lemma}\label{lem:almost_p}
Assume that $4\le |A|\le |B|\le |C|$. \textcolor{black}{If $A$ and $B$
are almost--progressions with the same difference $d$, then the coloring is $H$--regular  for some $H<G$.}
\end{lemma}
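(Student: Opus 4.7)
My plan is to split the proof according to whether the subgroup $H := \langle d \rangle$ generated by the common difference is proper in $G$.

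In the first case, $H \ne G$. Since every two elements of an almost-progression with difference $d$ differ by an element of $H$, the set $A$ lies entirely in a single coset of $H$. After a translation $A \subseteq H$, and since $|A| \ge 4 > 1$, I apply Lemma~\ref{lem:onecoset} directly to obtain a proper subgroup $H' < G$ for which the coloring is $H'$-regular. This case is essentially immediate.

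In the second case, $\langle d \rangle = G$, so $d$ is a generator of $G$ and, by composing with the automorphism $x \mapsto d^{-1} x$, I may assume $d = 1$. Thus $G = \Z / n\Z$ and both $A$ and $B$ are almost-intervals of $\Z/n\Z$. Here the plan is to show the rainbow-free hypothesis is inconsistent with $|A|, |B| \ge 4$, so no such coloring can exist and the lemma's conclusion holds vacuously. Writing $p = |A|$, $q = |B|$ and $e = (n+1)/2 = 2^{-1}$, the rainbow-free condition $M(A,B)\cap C=\emptyset$ is equivalent to $A + B \subseteq 2A \cup 2B$. In the representative case of true progressions, translation lets me take $A = \{0, 1, \ldots, p-1\}$ and $B = \{b, b+1, \ldots, b+q-1\}$ disjoint from $A$. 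Then $A + B$ is the interval $[b, b + p + q - 2]$ of $p + q - 1 \ge 7$ consecutive integers, while $2A \cup 2B$ is a union of two arithmetic progressions of common difference $2$ and sizes $p$ and $q$. A careful parity and wraparound analysis of the two progressions in $2A \cup 2B$ will show that covering the interval forces a linear relation on $b$ which, combined with $\gcd(e-1,n)=1$, yields $b = 0$, hence $A = B$, contradicting the disjointness of color classes.

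The main obstacle is extending this argument to the subcases where $A$ or $B$ is a \emph{strict} almost-progression (an arithmetic progression of length $p+1$ or $q+1$ with a middle point removed), since then $|A + B|$ can reach $|A| + |B|$ or even $|A|+|B|+1$. In those subcases I would appeal to Theorem~\ref{thm:gry} to obtain finer structural information on $A + B$ before completing the parity and position argument. A further delicate point is the case bookkeeping: in every relative placement of $A$ and $B$ in $\Z/n\Z$ (adjacent, separated, or wrapping around so that $A \cup B$ forms a single arc), one must check that no interval of at least $7$ consecutive integers can be covered by the union of two shorter APs of common difference $2$, and it is precisely the hypothesis $|A|, |B| \ge 4$ that makes the interval long enough for this obstruction to bite.
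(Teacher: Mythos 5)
Your Case 1 ($\langle d\rangle\neq G$, so that $A$ sits in a single coset of $\langle d\rangle$ and Lemma~\ref{lem:onecoset} applies) is exactly the paper's first step and is fine. The problem is Case 2, which in your write-up is a plan rather than a proof, and whose central combinatorial claim is not correct as stated. You assert that ``no interval of at least $7$ consecutive integers can be covered by the union of two shorter APs of common difference $2$''; this is false: an interval of $2k$ consecutive residues is precisely the union of its even-positioned and odd-positioned elements, two difference-$2$ progressions of length $k$ each, and here $|2A|+|2B|=p+q=|A+B|+1$, so the covering is excluded neither by size nor by parity alone. Any contradiction must come from the fact that the two difference-$2$ progressions are forced to be specifically $2\cdot A=\{0,2,\dots,2p-2\}$ and $2\cdot B=\{2b,\dots,2b+2q-2\}$, i.e.\ from the positional relation between $A+B$ and $2\cdot A\cup 2\cdot B$ --- the ``linear relation on $b$'' you allude to but never derive. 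Moreover the genuinely hard subcases (strict almost--progressions, where $|A+B|$ can reach $|A|+|B|$, and the wrap-around placements) are explicitly left open, and the appeal to Theorem~\ref{thm:gry} there does not help: that theorem merely returns you to the trichotomy from which this lemma was invoked and says nothing about how $A+B$ sits relative to $2\cdot A\cup 2\cdot B$. Note also that $(A+B)\cap 2\cdot C=\emptyset$ is only one of the three constraints imposed by rainbow--freeness, not equivalent to it (harmless for a proof by contradiction, but the phrasing is wrong).

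For comparison, the paper's Case 2 avoids all of this bookkeeping: it lets $b$ be the minimum circular distance from $A$ to $B$ and exhibits a rainbow progression directly. If $b=1$, with $n-1\in A$ and $0\in B$, the bound $\max\{|A|,|B|\}\le (n-5)/2$ (which follows from $|A|\le|B|\le|C|$ and $|A|\ge 4$) forces $(n-1)/2\in C$, and $\{0,(n-1)/2,n-1\}$ is rainbow. If $b>1$, one may take $\{n-1,0\}\subset A$, a block $\{1,\dots,b'\}\subseteq C$ followed by $b'+1\in B$, and then $\{0,(b'+1)/2,b'+1\}$ or $\{n-1,(b'+2)/2,b'+1\}$ is rainbow according to the parity of $b'$. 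To repair your argument you would need either to carry out the covering analysis in full, including the almost--progression and wrap-around cases, or to switch to a direct exhibition of a rainbow triple of this kind.
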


\begin{proof}
If $d$ generates a proper subgroup $H$ of $G$ then $A$ is contained
in a single coset of $H$ and  the result follows by
Lemma~\ref{lem:onecoset}.

Thus we may assume that $d$ generates the full group, so that $G$ is
the cyclic group $\Z/n\Z$ and we may assume $d=1$
\textcolor{black}{(since the property of being  rainbow--free   is invariant by
dilations)}. We will show that in this case $c$ contains a rainbow
$3$--term arithmetic progression.

Let $b$ be the minimum circular distance from elements in $A$ to
elements in $B$.

If $b=1$ we may assume that $n-1\in A$ and $0\in B$. Since $\max\{
|A|,|B|\}\le (n-5)/2$ we have $(n-1)/2\in C$ giving the rainbow $\{
0,(n-1)/2, n-1\}$.

Suppose now that $b>1$. Since $|A|\ge 4$ we may assume that
$\{n-1,0\}\subset A$ and $\{1,2,\ldots ,b'\}\subseteq C$ and
$b'+1\in B$ for some $b'\ge b$. If $b'$ is odd then
$\{0,(b'+1)/2,b'+1\}$ is rainbow and if $b'$ is even then $\{n-1,
(b'+2)/2, b'+1\}$ is rainbow.\qed
\end{proof}

\section{Periodic color classes}\label{sec:local}

In this section we analyze the structure of the color classes when
they are close to be periodic. The consideration of these cases
arise from the discussion on the size of sumsets of the color
classes in a ranbow--free $3$--coloring and the KST and Grynkiewicz
theorems.

Throughout the section \textcolor{black}{we keep the notation of the previous one. Thus $G$
denotes an abelian group of odd order $n$
and $c$ is a rainbow--free $3$--coloring of $G$ with non--empty
color classes $\{A,B,C\}$. The coloring is $H$--regular if it satisfies conditions (i), (ii)
and (iii) of Theorem \ref{thm:main} for some subgroup $H$.}

Recall that, for a subset $X\subset G$ and a subgroup $H<G$, we
denote by $X/H$ the image of $X$ by the natural projection $\pi
:G\rightarrow G/H$.

 \textcolor{black}{The proof of
Theorem~\ref{thm:main} (in Section ~\ref{sec:proof})
 is by induction on the number of  (not necessarily
distinct) primes dividing $n=|G|$, the initial step being proved in Section~\ref{sec:prime}. Thus, in the remainder of this section  we  will assume that
Theorem~\ref{thm:main} holds for any group of order a proper divisor
of $n=|G|$.}

We start with \textcolor{black}{the simplest case.}

\begin{lemma}\label{lem:periodic}
\textcolor{black}{If the three color classes $A,B,$ and $C$ are
$K$--periodic for some subgroup $K<G$, then the coloring is $H$--regular
for some $H<G$.}
\end{lemma}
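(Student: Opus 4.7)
The plan is to pass to the quotient $G/K$, invoke Theorem~\ref{thm:main} there by induction, and lift the resulting structure back to $G$. I may assume $K$ is non--trivial, for otherwise $K$--periodicity is vacuous and the assertion of the lemma becomes Theorem~\ref{thm:main} itself, which is what is being proved by induction and not yet available. Since $K<G$ is a proper non--trivial subgroup, $|G/K|$ is a proper divisor of $n$, so the inductive hypothesis is available in $\bar G:=G/K$.

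Let $\pi\colon G\to\bar G$ denote the natural projection and consider the induced coloring $\bar c$ of $\bar G$ with non--empty color classes $\bar A=\pi(A)$, $\bar B=\pi(B)$, $\bar C=\pi(C)$. The key step is to verify that $\bar c$ is rainbow--free. Given any $3$--term arithmetic progression $(\bar x,\bar y,\bar z)$ in $\bar G$, choose representatives $x,y,z\in G$; then $k:=x+y-2z\in K$. Since $|K|$ divides the odd integer $n$, multiplication by $2$ is a bijection of $K$, so there exists a unique $k'\in K$ with $2k'=k$. The triple $(x,y,z+k')$ is then a genuine $3$--term AP in $G$, and $K$--periodicity of $c$ gives $c(z+k')=c(z)$; hence any rainbow AP in $\bar G$ would lift to a rainbow AP in $G$, contradicting the hypothesis. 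This is the main subtlety of the argument, and it depends essentially on the oddness of $n$ to divide $k$ by $2$ inside $K$.

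The inductive hypothesis then furnishes a proper subgroup $\bar H<\bar G$ and, up to a translation of $\bar c$, a distinguished color class (say $\bar A$) with $\bar A\subseteq\bar H$ such that the sets $\widetilde{\bar B}:=\bar B\setminus\bar H$ and $\widetilde{\bar C}:=\bar C\setminus\bar H$ are $\bar H$--periodic and satisfy $\widetilde{\bar B}=-\widetilde{\bar B}=2\cdot\widetilde{\bar B}$, $\widetilde{\bar C}=-\widetilde{\bar C}=2\cdot\widetilde{\bar C}$. I will define $H:=\pi^{-1}(\bar H)$, a proper subgroup of $G$ containing $K$, and translate $c$ by any preimage of the translation used on $\bar G$. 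Because the $K$--periodic subsets of $G$ correspond bijectively to subsets of $\bar G$ via $\pi$, the three conditions of Theorem~\ref{thm:main} for $(c,H)$ drop out at once: (i)~$A=\pi^{-1}(\bar A)\subseteq H$, and $c|_H$ is rainbow--free simply because $c$ is; (ii)~$\widetilde B=B\setminus H=\pi^{-1}(\widetilde{\bar B})$ is a union of $H$--cosets, and analogously for $\widetilde C$; (iii)~the $K$--periodic sets $-\widetilde B$ and $2\cdot\widetilde B$ both have $\pi$--image $\widetilde{\bar B}$, so they must coincide with $\widetilde B=\pi^{-1}(\widetilde{\bar B})$, and symmetrically for $\widetilde C$.
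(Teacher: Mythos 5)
Your proof is correct, and its first half coincides with the paper's: both pass to the quotient $G/K$, observe that the induced coloring is rainbow--free, and apply the inductive hypothesis there (your explicit verification of rainbow--freeness of the quotient coloring, by halving $x+y-2z$ inside $K$ using the oddness of $n$, is a detail the paper merely asserts). The endgame differs: the paper extracts only the containment $\bar A\subseteq \bar H$ from the inductive conclusion, pulls back to get $A$ inside a coset of $\pi^{-1}(\bar H)$, and then delegates the remaining work to Lemma~\ref{lem:onecoset}, which re-derives conditions (ii) and (iii) via the $3$--cosets lemma; you instead lift all three conditions of Theorem~\ref{thm:main} directly through the bijection between $K$--periodic subsets of $G$ and subsets of $G/K$ (noting that $-$ and $2\cdot$ commute with $\pi$ and preserve $K$--periodicity). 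Your route is slightly longer on the page but self-contained modulo the induction, and it avoids the dependence on Lemma~\ref{lem:onecoset}; the paper's is shorter because that lemma is already available. One point you rightly flag, and which the paper leaves implicit: the statement must be read with $K$ nontrivial (otherwise $G/K\cong G$ and the argument is circular); every later invocation of the lemma does supply a nontrivial $K$, so this is a matter of phrasing rather than a gap.
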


\begin{proof}
\textcolor{black}{Consider the coloring $A'=A/K$, $B'=B/K$ and
$C'=C/K$ of $G/K$. Note that, since $\{A,B,C\}$ is a rainbow--free
$3$--coloring with nonempty color classes, then so it is
$\{A',B',C'\}$. Since $G/K$ is a group of order a proper divisor of
$n=|G|$, the coloring $\{A',B',C'\}$ is $H'$--regular for some $H'<G/K$ .
In particular, there is a color class, say $A'$, such that
$A'\subseteq H'<G/K$. Thus $A$ is contained in a single coset of the
proper subgroup $H'+K$ in $G$, and the statement follows from Lemma
\ref{lem:onecoset}. \qed }
\end{proof}

We next consider the case where two of the color classes are
quasiperiodic. Recall that a set $S\subset G$ is
$H$--{\em quasiperiodic} if it admits a decomposition $S=S_0\cup
S_1$, where each of $S_0$ and $S_1$ can be empty, $S_1$ is a maximal
$H$--periodic  subset of $S$ and $S_0$ is (properly) contained in a
single coset of $H$.

\begin{lemma}\label{lem:qp}
\textcolor{black}{If $A=A_0\cup A_1$ and $B=B_0\cup B_1$ are
 $K$--quasiperiodic decompositions of $A$ and $B$
with  $K$ a nontrivial proper subgroup of $G$,
 then the coloring is $H$--regular for some $H<G$.}
\end{lemma}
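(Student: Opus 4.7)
The argument is by induction on $|G|$, using that Theorem~\ref{thm:main} holds for all groups of order a proper divisor of $n$. The plan is to construct a rainbow--free three--coloring of the quotient $G/K$, apply induction there, and lift the regularity back to $G$. We first dispose of degenerate sub--cases. If $A_1=\emptyset$, then $A=A_0$ lies in a single $K$--coset and Lemma~\ref{lem:onecoset} gives $H$--regularity; similarly for $B_1=\emptyset$. If $A_0=B_0=\emptyset$, then $A$ and $B$ (and hence $C=G\setminus(A\cup B)$) are $K$--periodic and Lemma~\ref{lem:periodic} applies. So we may assume $A_1,B_1\neq\emptyset$ and that at least one of $A_0,B_0$ is nonempty.

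Let $X_A$ and $X_B$ denote the $K$--cosets containing $A_0$ and $B_0$, respectively (when they are nonempty). Maximality of $A_1$ and $B_1$ as $K$--periodic subsets forces every $K$--coset other than $X_A$ and $X_B$ to be monochromatic; when $X_A\neq X_B$, the coset $X_A$ has no $B$--element and is split between $A$ and $C$, symmetrically for $X_B$; when $X_A=X_B=:X_0$, the single mixed coset may intersect all three colors. Define a partition $\{\bar A,\bar B,\bar C\}$ of $G/K$ by $\bar A=A_1/K$, $\bar B=B_1/K$ and $\bar C$ the remaining cosets, with the exception that when $X_A=X_B$ and $X_0\cap C=\emptyset$ we reassign $X_0$ to $\bar A$. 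All three classes are nonempty ($A_1,B_1\neq\emptyset$ and $\bar C$ contains either a fully $C$--coset or a mixed coset with a $C$--part). A direct lift argument shows this coloring is rainbow--free: any would--be quotient rainbow three--term AP $(\bar X_1,\bar X_2,\bar X_3)$ admits a choice of $z\in X_3\cap C$ and then $x\in X_1\cap A$, $y=2z-x\in X_2\cap B$, producing a genuine rainbow AP in $G$ (the exceptional sub--case uses $|A_0|\geq 1$ in the same way).

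The inductive hypothesis applied to $G/K$ yields a proper subgroup $H'<G/K$ for which the quotient coloring is $H'$--regular. Setting $H=\pi^{-1}(H')$, conditions~(ii) and~(iii) for the original coloring on $G$ follow from the corresponding conditions on $G/K$, since $\pi$ commutes with the maps $x\mapsto -x$ and $x\mapsto 2x$ and taking pre--images preserves periodicity. Condition~(i) requires some original color class to lie inside $H$; since $A_1=\pi^{-1}(\bar A)\subseteq H$ when $\bar A\subseteq H'$, the issue is whether the mixed cosets $X_A$ (and $X_B$) sit inside $H'$, for in that case $A_0\subseteq X_A\subseteq H$. The main obstacle, and the technical heart of the argument, is verifying this containment: one must use the freedom in choosing which class plays the role of the inside class in the quotient, combined with the rigidity imposed by $H'$--periodicity and the symmetries $\bar B\setminus H'=-(\bar B\setminus H')=2\cdot(\bar B\setminus H')$, to force the mixed cosets into $H'$ after a suitable translation. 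Any attempt to leave a mixed coset outside $H'$ must be shown to produce a rainbow three--term AP in $G$, contradicting the hypothesis and thereby completing the verification of $H$--regularity.
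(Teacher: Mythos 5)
Your reduction to a quotient coloring of $G/K$ is sound as far as it goes: the degenerate cases ($A_1=\emptyset$, $B_1=\emptyset$, $A_0=B_0=\emptyset$) are correctly dispatched to Lemmas~\ref{lem:onecoset} and~\ref{lem:periodic}, and your lifting argument for rainbow--freeness of $\{\bar A,\bar B,\bar C\}$ works, since at most one of the three cosets in a quotient progression is non--monochromatic and the element in the remaining (determined) position then lands in a monochromatic coset of the correct color. But the proof has a genuine gap exactly where you flag ``the main obstacle and the technical heart of the argument'': you never actually show that the mixed cosets $X_A$, $X_B$ land inside $H$. Without that, condition~(i) fails (no original color class lies in a single $H$--coset), and condition~(ii) fails as well, since if $X_B\not\subseteq H$ then $B\setminus H$ contains the proper subset $B_0$ of a $K$--coset and cannot be $H$--periodic --- so this is not merely a condition--(i) issue as you frame it. Saying that ``any attempt to leave a mixed coset outside $H'$ must be shown to produce a rainbow three--term AP'' is a statement of what remains to be proved, not a proof, and it is not a routine verification.

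For comparison, the paper spends essentially its entire proof on this point. It first shows, via the $3$--cosets Lemma applied to the progression $(K, B_0+K, Z)$ with $Z$ monochromatic, that $A_0/K=B_0/K$, so there is a single mixed coset which may be translated to $K$ itself; it then defines the quotient coloring with $A'=A/K$ (so that \emph{all} of $A$ descends into one class, avoiding your lifting problem), applies induction to get a subgroup $L\supseteq K$, rules out the case where $C'$ is the class inside a single $L/K$--coset by another $3$--cosets argument, and in the remaining case where $B'$ is the inside class it constructs a second quotient coloring modulo $L$ with $|B''|=2$ and invokes Lemma~\ref{lem:A=2} to force $B$ into a single coset of a proper subgroup, after which Lemma~\ref{lem:onecoset} finishes. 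Some argument of this kind (or a genuine substitute for it) is required to close your proof; as written, the proposal establishes the setup but not the conclusion.
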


\begin{proof}By Lemma~~\ref{lem:onecoset} we may assume that none of the color classes
is contained in a single coset of a proper subgroup of $G$. If two
of the color classes are periodic then so is the third one and the
result follows from   Lemma~\ref{lem:periodic}. Therefore, up to
renaming the color classes we may assume that each of the sets $A_0,
B_0, A_1$ and $B_1$ are nonempty, and that
$|C/\textcolor{black}{K}|>1$. We also assume that $0\in A_0$.

Let us show that $A_0/\textcolor{black}{K}= B_0/\textcolor{black}{K}$.
Suppose the contrary and let $Z$ be a $\textcolor{black}{K}$--coset
such that $X=\textcolor{black}{K}, Y=B_0+\textcolor{black}{K}$ and $Z$
are in arithmetic progression (such a coset always exists since $n$
is odd). Note that   \textcolor{black}{$X$ intersects $A$ and $C$,    $Y$ intersects $B$ and $C$ and that $Z$ is
monochromatic. This contradicts the $3$--cosets Lemma ( Lemma~\ref{lem:3cosets}).
Hence $A_0/\textcolor{black}{K}= B_0/\textcolor{black}{K}$.}

Consider the $3$--coloring \textcolor{black}{$c_K$} of
$G/\textcolor{black}{K}$ with color classes $\{A',B',C'\}$ where
$A'=A/\textcolor{black}{K}$, $B'=B_1/\textcolor{black}{K}$ and
$C'=G/\textcolor{black}{K}\setminus (A'\cup B')$. Note that
$C'=(C\setminus \textcolor{black}{K})/\textcolor{black}{K}$. Observe
that \textcolor{black}{$c_K$ is a $3$--coloring of $G/K$ with non
empty color classes. Moreover, $c_K$ is  rainbow--free,} otherwise
we have three $\textcolor{black}{K}$--cosets in $G$ in arithmetic
progression where at least two of them are monochromatic (since both
$B_1$ and $C\setminus \textcolor{black}{K}$ are
$\textcolor{black}{K}$--periodic) \textcolor{black}{contradicting}
   the $3$--cosets Lemma (Lemma~\ref{lem:3cosets}).

\textcolor{black}{Since we assume that Theorem~\ref{thm:main} holds in $G/K$,}
there is a \textcolor{black}{proper} subgroup \textcolor{black}{$L<G$} containing
\textcolor{black}{$K$} such that, up to translation, one of the three
chromatic classes of \textcolor{black}{$c_K$} is contained in
\textcolor{black}{$L/K$} and the remaining two  are
\textcolor{black}{$(L/K)$}--periodic outside \textcolor{black}{$L/K$}.

Suppose that $A'\subset \textcolor{black}{L/K}$. Then
\textcolor{black}{$A\subset L$ and the statement follows by Lemma
\ref{lem:onecoset}.}

Let us show now that $C'$ can not be contained in a single coset of
\textcolor{black}{$L/K$} in \textcolor{black}{$G/K$}. Suppose on the
contrary  that $C\textcolor{black}{\setminus K}$ is contained in a
single \textcolor{black}{$L$}--coset $X$ of $G$. Let $Z$ be a
\textcolor{black}{$L$}--coset in arithmetic progression with $X$ and
$Y=A_0+\textcolor{black}{L}$. Since $Y$ intersects the two colors, $A$
and $B$, and $Z$ is necessarily monochromatic with color $A$ or $B$,
we have $|Z|+|Y_A|, |Z|+|Y_B|>\textcolor{black}{|L|}$ contradicting
Lemma~\ref{lem:3cosets}.

Suppose now that $B'$ is contained in a single coset of
\textcolor{black}{$L/K$} in \textcolor{black}{$G/K$. We may assume that
$B_1$ is contained in a single $L$--coset $X\neq L$ in $G$,
otherwise we are done by  Lemma \ref{lem:onecoset}.}

Consider the coloring \textcolor{black}{$c_L$} of
$G/\textcolor{black}{L}$ with color classes $\{A'',B'',C''\}$ where
\textcolor{black}{$A''=(A_1\setminus X)/H,B''=B/\textcolor{black}{L},$}
and $C''=G/\textcolor{black}{L}\setminus (A''\cup B'')$.
\textcolor{black}{Note that $|B''|=2$ and $|C''|>1$, otherwise
$C\subset X$ and $C'$ can not be contained in a single coset of
$L/K$ as shown in the paragraph above.}

\textcolor{black}{If $A''=\emptyset$, consider $Z$ an $L$--coset in
arithmetic progression with $L$ and $X$ (exist since the order of
$G$ is odd). Note that $L$ intersects $A$, $X$ intersects $B$ and
$Z$ is monochromatic of color $C$, a contradiction by
Lemma~\ref{lem:3cosets}.}

\textcolor{black}{If $|A''|>1$, note that $c_L$ is a $3$--coloring of
$G/L$ with non empty color classes. Moreover,} $c_L$ is
rainbow--free since otherwise we have three
\textcolor{black}{$L$}--cosets in $G$ in arithmetic progression where
one of them is monochromatic (since $C''$ is
\textcolor{black}{$L$--periodic) a contradiction by
Lemma~\ref{lem:3cosets}.} \textcolor{black}{Since $|B''|=2$,} it
follows from Lemma~\ref{lem:A=2} that \textcolor{black}{ $B''$ is
contained in a single coset of  a proper subgroup $H/L<G/L$. Thus
$B\subseteq H$ and the statement follows from
Lemma~\ref{lem:onecoset}.}\qed
\end{proof}

We next consider the case where $A+B$ is
\textcolor{black}{$K$}--periodic for some subgroup
\textcolor{black}{$K$} of $G$. Observe that, since $c$ is
rainbow--free, we also have $\textcolor{black}{K}\neq G$.

\begin{lemma}\label{lem:A+Bperiodic}
If $A+B$ is \textcolor{black}{$K$}--periodic for some proper nontrivial subgroup \textcolor{black}{$K$} of
$G$, \textcolor{black}{then the coloring is $H$--regular for some subgroup $H<G$. }
\end{lemma}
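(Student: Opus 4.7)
The plan is to exploit the $K$--periodicity of $A+B$, together with Kneser's theorem and the rainbow--free hypothesis, to force most cosets of the stabilizer to be monochromatic, and then to invoke Lemma~\ref{lem:periodic} or Lemma~\ref{lem:qp}.

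First I would pass to the full period. Set $H:=P(A+B)$. Since $A+B$ is $K$--periodic and $K$ is nontrivial, $H\supseteq K$ is nontrivial; since $c$ is rainbow--free we have $(A+B)\cap(2\cdot C)=\emptyset$, and as $C\neq\emptyset$ this forces $A+B\neq G$, so $H$ is proper. Because $|G|$ is odd, $2H=H$, so the doubling map $g\mapsto 2g$ descends to a bijection of $G/H$; in particular the number of $H$--cosets meeting $2\cdot C$ equals the number $h_C$ meeting $C$, and we analogously define $h_A,h_B$.

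Next I would derive a tight counting inequality. Kneser's theorem gives
\[
|A+B|\geq |A+H|+|B+H|-|H|=(h_A+h_B-1)|H|,
\]
so $A+B$ contains at least $h_A+h_B-1$ cosets of $H$. Since $A+B$ is $H$--periodic and disjoint from $2\cdot C$, it misses each of the $h_C$ cosets meeting $2\cdot C$. Comparing the two bounds yields
\[
h_A+h_B+h_C\leq \frac{n}{|H|}+1.
\]
Writing $n_i$ for the number of $H$--cosets meeting exactly $i$ color classes, we have $n_1+n_2+n_3=n/|H|$ and $h_A+h_B+h_C=n_1+2n_2+3n_3$; substituting gives $n_2+2n_3\leq 1$, so $n_3=0$ and $n_2\in\{0,1\}$.

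A short case split finishes the argument. If $n_2=0$ every $H$--coset is monochromatic, so $A,B,C$ are all $H$--periodic and Lemma~\ref{lem:periodic} gives $H$--regularity. If $n_2=1$ let $X^*$ be the unique bichromatic $H$--coset and let $W\in\{A,B,C\}$ be the color not meeting $X^*$; then $W$ is $H$--periodic, while each of the other two colors decomposes as a (possibly empty) union of whole monochromatic $H$--cosets together with its intersection with the single coset $X^*$, which is an $H$--quasiperiodic decomposition. After relabeling, Lemma~\ref{lem:qp} produces the desired subgroup and $H$--regularity. The only truly delicate point in the plan is the bookkeeping step: pairing Kneser's lower bound with the upper bound from avoidance of $2\cdot C$ requires the odd--order hypothesis to identify $h_C$ with the number of cosets meeting $2\cdot C$, and this identification is what turns the rainbow--free condition into the sharp inequality $h_A+h_B+h_C\leq n/|H|+1$.
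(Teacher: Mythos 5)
Your proof is correct and follows essentially the same route as the paper's: both combine Kneser's theorem applied to $A+B$ with the rainbow--free disjointness $(A+B)\cap (2\cdot C)=\emptyset$ to force the color classes to be quasiperiodic with respect to the period of $A+B$, and then conclude via Lemma~\ref{lem:qp} (with Lemma~\ref{lem:periodic} covering your fully periodic subcase). The only difference is bookkeeping: the paper works in $G/K$ with the inclusion $(A+B)/2\subseteq A\cup B$ and bounds $|D/K|+|A/K\cap B/K|\le 1$, whereas you pass to the full period $H=P(A+B)$ and count multichromatic cosets via $n_2+2n_3\le 1$ --- an equivalent (and, if anything, slightly cleaner) way of reaching the same conclusion.
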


\begin{proof} We  show that, under the assumption of the Lemma, both sets $A$
and $B$ admit a \textcolor{black}{$K$}--quasiperiodic decomposition and thus
\textcolor{black}{the statement} follows from Lemma \ref{lem:qp}.

By the Theorem of Kneser we have
\begin{equation}\label{eq1}|A/K+B/K|\ge |A/K|+|B/K|-1.\end{equation}
Since $A+B\cap 2\cdot C=\emptyset$ we have
\begin{equation}\label{eq2}(A+B)/2\subset (G\setminus C)=A\cup B,\end{equation}
where $X/2$ denotes the image of $X\subset G$ by the inverse of the
automorphism of $G$ defined as  $x\mapsto 2x$. This automorphism
leaves all subgroups invariant so that $(A+B)/2$ is also
$K$--periodic. Let $$D=((A\cup B)+K)\setminus (A+B)/2.$$ Note that
the aperiodic parts of $A$ and of $B$ are contained in $D\cup (A\cap
B)$. By (\ref{eq1}) we have
$$
|A/K|+|B/K|-|A/K\cap B/K|=|(A\cup B)/K|=|(A+B)/K|+|D/K|\ge
|A/K|+|B/K|-1+|D/K|,$$
which implies
$$
|D/K|+|A/K\cap B/K|\le 1.
$$
Hence each of $A$ and $B$ admits a $K$--quasiperiodic
decomposition. \qed
\end{proof}

Now we \textcolor{black}{consider} the case where two of the color classes are
\emph{almost} quasiperiodic. A set $X\subset G$ is {\it  almost}
$H$--quasiperiodic (resp. almost $H$--periodic) if there is $x\in G$
such that $X\cup \{ x\}$ is $H$--quasiperiodic (resp.
$H$--periodic).

\begin{lemma}\label{lem:aqp}
If $A$ and $B$ are almost $H$--quasiperiodic for some proper
nontrivial subgroup $H<G$, \textcolor{black}{then the coloring is $H'$--regular for some proper subgroup $H'<G$.}
\end{lemma}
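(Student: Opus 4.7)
Plan. The strategy is to reduce to the $H$-quasiperiodic case (Lemma \ref{lem:qp}) by modifying the coloring at at most two points. If both $A$ and $B$ happen to be $H$-quasiperiodic, apply Lemma \ref{lem:qp} directly. Otherwise, by hypothesis there exist $a_0 \notin A$ and/or $b_0 \notin B$ with $A \cup \{a_0\}$ and $B \cup \{b_0\}$ both $H$-quasiperiodic; the failure of quasiperiodicity for $A$ forces $a_0$ to lie in the maximal $H$-periodic part of $A \cup \{a_0\}$, producing an $H$-coset $X$ with $X \setminus \{a_0\} \subseteq A$, and similarly an $H$-coset $Y$ with $Y \setminus \{b_0\} \subseteq B$ when $B$ is not $H$-quasiperiodic. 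Using Lemma \ref{lem:onecoset} we may also assume no color class is contained in a single coset of a proper subgroup of $G$; in particular the periodic parts of $A$ and $B$ contain full $H$-cosets beyond $X$ and $Y$.

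First I would pin down the colors of $a_0$ and $b_0$, showing that both must lie in $C$. Suppose for contradiction $a_0 \in B$. Then $X$ has $|H|-1$ points of $A$ and the single point $a_0 \in B$, and misses $C$ entirely. Pick a full $H$-coset $Z_1 \subseteq A$ and let $Z_2$ be the third coset of the arithmetic progression in $G/H$ determined by $Z_1$ and $X$. If $Z_2$ meets $C$, the $3$-cosets Lemma \ref{lem:3cosets} applied to $(Z_1, X, Z_2)$ with the color assignment $(A,B,C)$ yields $|H|+1 \leq |H|$, a contradiction. If no full $A$-coset $Z_1$ has its AP-partner meeting $C$, then as $Z_1$ ranges over the (many) full $A$-cosets, the forbidden set $\{2X - Z_1\}$ covers a large portion of $G/H$, either forcing $|C|$ to be very small or pushing $A$ into a single coset of a larger subgroup, in both cases reducing to earlier lemmas. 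The symmetric argument handles $b_0 \in A$, so we conclude $a_0, b_0 \in C$.

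Next, consider the modified coloring $\tilde A = A \cup \{a_0\}$, $\tilde B = B \cup \{b_0\}$, $\tilde C = C \setminus \{a_0, b_0\}$, in which $\tilde A$ and $\tilde B$ are $H$-quasiperiodic by construction. A rainbow 3-AP in the modified coloring that was not rainbow in the original must involve $a_0$ or $b_0$; the delicate case is $(a_0,(a_0+b_0)/2,b_0)$, but for any $a \in X \cap A$ (exists since $|X \setminus \{a_0\}| = |H|-1 > 0$), setting $b = a_0 + b_0 - a \in Y \setminus \{b_0\} \subseteq B$ produces an original 3-AP $(a,(a_0+b_0)/2,b)$ with endpoints colored $A$ and $B$, which by rainbow-freeness of the original forces $(a_0+b_0)/2 \notin C$, so this AP cannot be rainbow in the modified coloring either. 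APs involving a single modified point are ruled out analogously by combining the near-monochromaticity of $X$ or $Y$ with the 3-cosets lemma. Hence $\{\tilde A, \tilde B, \tilde C\}$ is rainbow-free, and when $\tilde C \neq \emptyset$ Lemma \ref{lem:qp} supplies a proper subgroup $H'$ for which the modified coloring is $H'$-regular. The structural conclusion of Theorem \ref{thm:main} then forces $H \subseteq H'$ and $a_0, b_0$ to lie inside the $H'$-coset housing the small color class, so swapping $a_0, b_0$ back to $C$ preserves the $H'$-periodicity of the pieces outside $H'$, yielding $H'$-regularity of the original coloring. The degenerate case $\tilde C = \emptyset$ means $|C| \leq 2$ and is handled by Lemmas \ref{lem:A=1} and \ref{lem:A=2}.

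The main obstacle I anticipate is Step 2, ruling out $a_0 \in B$: while the 3-cosets inequality delivers a quick contradiction when the periodic part of $A$ is large enough to furnish useful AP-partners, borderline configurations (very small periodic parts, or $|H|$ small) require combining the inequality with the equality case of Lemma \ref{lem:3cosets} and the earlier structural lemmas to escape. The verification that no new rainbow 3-AP appears in the modified coloring (Step 3) is also routine but case-heavy, needing a systematic check on the position of $a_0$ and $b_0$ within the AP.
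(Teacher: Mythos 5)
Your overall plan (repair $A$ and $B$ by adjoining the missing points $a_0,b_0$, apply Lemma~\ref{lem:qp} to the modified coloring, then transfer the structure back) is genuinely different from the paper's argument, which instead shows that, after the standing reductions (no class in a single coset, no class periodic, $\min\{|A/H|,|B/H|,|C/H|\}>1$), the presence of a punctured coset leads directly to a contradiction via repeated use of the $3$--cosets Lemma. The central gap in your version is Step~3: the claim that $\{\tilde A,\tilde B,\tilde C\}$ is rainbow--free. The only progression you actually treat is the one with both endpoints $a_0,b_0$. The ``single modified point'' case is not analogous and is not ruled out by the argument you sketch: a progression $(a_0,y,z)$ with $y\in B$ and $z\in C$ is colored $(C,B,C)$ in the original coloring, hence is invisible to the original rainbow--freeness hypothesis, but becomes $(A,B,C)$ after the modification. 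Applying Lemma~\ref{lem:3cosets} to the coset triple $(a_0+H,\,y+H,\,z+H)$, using $|(a_0+H)\cap A|=|H|-1$, yields only $|(y+H)\cap B|\le 1$ and $|(z+H)\cap C|\le 1$ --- a constraint, not a contradiction. Eliminating these configurations is precisely where the combinatorial content of the lemma lives (it is what the paper's Cases 1 and 2 do), so deferring it as ``routine but case-heavy'' leaves the proof without its core.

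Two further gaps are smaller but real. In Step~2, the fallback branch of the argument that $a_0,b_0\in C$ (``the forbidden set covers a large portion of $G/H$, forcing $|C|$ small or $A$ into a single coset'') is asserted, not proved, and it is not clear it can be closed without essentially redoing the full case analysis. In the final transfer, $H'$--regularity of the modified coloring gives you nothing about the original unless $a_0$ and $b_0$ land inside the subgroup $H'$ (after the translation): if, say, $b_0\notin H'$, then $\tilde B\setminus H'$ being $H'$--periodic makes $B\setminus H'=(\tilde B\setminus\{b_0\})\setminus H'$ fail periodicity, destroying condition (ii) for the original coloring. You assert that ``the structural conclusion forces'' $a_0,b_0$ into the right coset, but no argument is given, and which of $\tilde A,\tilde B,\tilde C$ plays the role of the small class is not under your control. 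As it stands the proposal is a plausible reduction strategy with its hardest steps unproved.
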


\begin{proof} We say that a coset $X$ of a subgroup $H<G$
 is punctured if all but one of its elements
are in the same color class $U\in \{ A,B,C\}$. We then say that $X$
is a punctured coset of color $U$.

Since $A$ and $B$ are almost $H$--quasiperiodic, they admit
decompositions $A=A_0\cup A_1$ and $B=B_0\cup B_1$  where each of
$A_0$ and $B_0$ are  subsets of some $H$--coset and each of $A_1$
and $B_1$ are almost periodic so that each of them contains at most
one punctured coset.

We may assume that at least one of $A_1$ or $B_1$ contains a
punctured coset and that $0<|A_0|,|B_0|<|H|$ since otherwise $A$ and
$B$ are quasiperiodic and the result follows from
Lemma~\ref{lem:qp}. We may also assume that none of $A,B$ and $C$
are periodic since otherwise at least one of $A+B$ or $A+C$ is
periodic and the result is implied by Lemma~\ref{lem:A+Bperiodic}.
Finally we may assume that $\min \{|A/H|, |B/H|, |C/H|\}>1$ since
otherwise the result is a consequence of Lemma~\ref{lem:onecoset}.

\textcolor{black}{We next show that
the above assumptions lead to a contradiction, thus proving the Lemma.}
We consider two cases:

{\it Case 1\/}: $A_0+H\neq B_0+H$. Let $Z$ be a coset in arithmetic
progression with $X=A_0+H$ and $Y=B_0+H$.

We may assume that one of $X,Y$, say $X$, intersects $C$, since
otherwise $X$ is the punctured coset of $B_1$ and $Y$ is the
punctured coset of $A_1$, which implies that $C$ is periodic. In
particular $X\cap B=\emptyset$. Moreover, whatever the colors
present in $Z$, the conditions of Lemma~\ref{lem:3cosets} are
satisfied and $Z$ can not be a full coset. Since all $H$--cosets
different from $X$ and $Y$ are either monochromatic or punctured,
$Z$ is a punctured coset. Moreover it can not be of color $C$ since
$Z\cap A_0=Z\cap B_0=\emptyset$.

Suppose that $|Z_A|=|Z\cap A|=|H|-1$. Then, again by
Lemma~\ref{lem:3cosets}, $|Z_A|+|X_C|=|H|$, which implies
$|X_C|=|X\cap C|=1$ and $|Y_B|=|Y\cap B|=1$. Thus both $X$ and $Z$
are punctured cosets of color $A$. Since $A$ can not contain more
than two partially filled cosets, $Y$ is a punctured coset of color
$C$. Finally, the other color in $Z$ must also be $C$ since $Z$ is
not the coset containing $B_0$.

Since $|B/H|>1$ there is a coset $Y'\not\in \{X,Y,Z\}$ which
intersects $B$. Moreover, $Y'$ is either a full coset or a punctured
coset of $B$. Let $Z'$ be a third coset in arithmetic progression
with $X$ and $Y'$. Whatever the colors present in $Z'$, the
conditions of Lemma~\ref{lem:3cosets} are satisfied, so that both
$Y'$ and $Z'$ must be  punctured cosets. Thus  $Z'$ must intersect
$C$ (there are no punctured cosets with colors $A$ and $B$) and
$|X_A|+|Y_B|>|H|$, contradicting Lemma~\ref{lem:3cosets}.

Suppose now that $|Z_B|=|Z\cap B|=|H|-1$. If $Y\cap A\neq \emptyset$
then $Y$ is a punctured coset of color $A$ and $|Y_A|+|Z_B|>|H|$
contradicting Lemma~\ref{lem:3cosets}. Otherwise $Y$ intersects $C$
and application of Lemma~\ref{lem:3cosets} implies $|Y_C|=|X_A|=1$.
Thus both $Y$ and $Z$ are punctured cosets of $B$ with second color
$C$ and $X$ is a punctured coset of $C$ with second color $A$, the
same structure as in the case above with colors $A$ and $B$
exchanged, \textcolor{black}{and we again obtain a contradiction with Lemma~\ref{lem:3cosets}.}

{\it Case 2\/}: $A_0+H=B_0+H$. We may assume that at least one of
$A_1$ or $B_1$ contains a punctured coset which is not $X$,
otherwise $A$ and $B$ are quasiperiodic and the results follows from
Lemma~\ref{lem:qp}. So let $Y$ be a punctured coset of color $A$
(observe that $Y_B=\emptyset$ since $B_0$ is contained in $X$, thus
$|Y_C|=1$). Let $Z$ be a coset in arithmetic progression with $X$
and $Y$.

We fist prove that $Z$ is not monochromatic. If $Z$ is monochromatic
of color $B$ (resp. $C$ or $A$) then $|Z_B|+|Y_C|>|H|$ (resp.
$|Z_C|+|Y_A|>|H|$ or $|Z_A|+|Y_C|>|H|$) and we get a contradiction
by Lemma~\ref{lem:3cosets} since $X_A$ (resp. $X_B$) is not empty.

Thus $Z$ must be a punctured coset of color $B$ with $|Z_C|=1$.
Since $|Y_A|+|Z_B|>|H|$ then $X_C=\emptyset$. Since
$|Y_A|+|Z_C|=|H|$ then $|B_0|=1$, but also $|Y_C|+|Z_B|=|H|$ implies
$|A_0|=1$ which is a contradiction. \textcolor{black}{This completes the proof.} \qed
\end{proof}

\section{Proof of Theorem~\ref{thm:main}}\label{sec:proof}

 The next proposition proves the `if' part of
Theorem~\ref{thm:main}.

\begin{proposition}\label{prop:ifmain}  Let $\{ A,B,C\}$ be
a coloring of an abelian group $G$  of odd order. If
there is a proper subgroup $H$ of $G$ and a color class, say $A$,
such that \textcolor{black}{the three following conditions hold:}
\begin{enumerate}
\item[{\rm (i)}]  $A\subseteq H$, and the $3$--coloring induced in $H$ is
rainbow--free,
\item[{\rm (ii)}] \textcolor{black}{both $\widetilde{B}=B\setminus H$ and $\widetilde{C}=C\setminus H$ are
$H$--periodic sets, and}
\item[{\rm (iii)}]  $\widetilde{B}=-\widetilde{B}=2\widetilde{B}$ and
$\widetilde{C}=-\widetilde{C}=2\widetilde{C}$.
\end{enumerate}
\textcolor{black}{Then the $3$-coloring is
rainbow--free.}
\end{proposition}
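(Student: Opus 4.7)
The plan is to argue by contradiction. Suppose $(x,y,z)$ is an ordered triple with $x+y=2z$ and $\{c(x),c(y),c(z)\}=\{A,B,C\}$. Exploiting the $x\leftrightarrow y$ symmetry, there are three cases distinguished by the color of the middle term $z$; within each case we further split according to whether certain elements belong to $H$ or not.

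Assume first that $c(z)=A$. By (i) we have $z\in A\subseteq H$, hence $2z\in H$, so $x+y\in H$. If $x\in H$ then $y=2z-x\in H$, so all three elements of the AP lie in $H$ and form a rainbow AP for the coloring induced on $H$, contradicting (i). Otherwise $x\notin H$ and, since $x+y\in H$, also $y\notin H$; after possibly renaming $B$ and $C$ we may take $x\in\widetilde{B}$ and $y\in\widetilde{C}$. From $x+y\in H$ we get $y\in -x+H$, and combining $-\widetilde{B}=\widetilde{B}$ from (iii) with the $H$-periodicity of $\widetilde{B}$ from (ii) forces $y\in\widetilde{B}$, contradicting $y\in\widetilde{C}$.

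Assume now that $c(z)=B$. After swapping $x$ and $y$ if necessary we may set $c(x)=A$ and $c(y)=C$, so $x\in A\subseteq H$. If $z\in H$ then $y=2z-x\in H$ and, as before, $(x,y,z)$ is a rainbow AP inside $H$, contradicting (i). If $z\notin H$ then $z\in\widetilde{B}$; from $2\cdot\widetilde{B}=\widetilde{B}$ in (iii) we obtain $2z\in\widetilde{B}$, and since $y=2z-x\in 2z+H$ and $\widetilde{B}$ is $H$-periodic by (ii), we conclude $y\in\widetilde{B}$, contradicting $c(y)=C$. The case $c(z)=C$ is handled identically with $B$ and $C$ exchanged.

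There is no serious obstacle: the argument is a structured case analysis, and each of the three hypotheses is used at the point where it is needed — (i) whenever a candidate rainbow AP lies entirely in $H$, and the symmetry and doubling properties in (ii) and (iii) whenever at least one term falls outside $H$, forcing two of the elements of the triple into the same set $\widetilde{B}$ or $\widetilde{C}$ and thereby ruling out the rainbow configuration.
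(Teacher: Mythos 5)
Your proof is correct. It is essentially the paper's argument unwound at the level of individual elements: where the paper passes to the quotient $G/H$ and observes that a coloring with $A/H=\{0\}$ and the other classes closed under negation and doubling admits no rainbow progression, you carry out the same case analysis directly in $G$, using (i) for triples contained in $H$ and (ii)--(iii) to force two terms of any other triple into the same class.
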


\begin{proof}
If $C=G\setminus H$ then $A+B$ is contained in $H$,
and thus it is disjoint from $2\cdot C$. Moreover, each of $2\cdot
A$ and $2\cdot B$ are contained in $H$ and thus disjoint from
$A+C=B+C=C$.

Suppose that $C\neq G\setminus H$. Since, \textcolor{black}{by (i)}, a rainbow
$3$--term arithmetic progression in $G$ can not be contained in $H$, it gives rise, by
conditions (ii) and (iii), to a rainbow \textcolor{black}{$3$--term arithmetic progression in $G/H$}  with the
coloring $\{A/H, \widetilde{B}/H, \widetilde{C}/H\}$. However, since
every color class $X$ in this last coloring verifies $X=-X=2\cdot X$
\textcolor{black}{any $3$--term arithmetic progression of $G/H$ } containing $A/H$ has its remaining
members in the same color class. Hence $\{A/H, \widetilde{B}/H,
\widetilde{C}/H\}$ is rainbow--free and so it is $c$. \qed
\end{proof}

\textcolor{black}{It remains} to prove that, if $c$ is a rainbow--free
coloring \textcolor{black}{of an abelian group $G$ of odd order,} then
the color classes verify conditions (i), (ii) and
(iii) \textcolor{black}{of Theorem \ref{thm:main}} with some proper subgroup $H<G$. To prove this we use
the results in sections \ref{sec:prime}, \ref{sec:small} and
\ref{sec:local} together with the theorems of Kneser, Kemperman and
Grynkiewicz.

\begin{proposition}\label{prop:onlyif} \textcolor{black}{Let $\{ A,B,C\}$ be
a rainbow--free coloring of an abelian group $G$  of odd order. }

\textcolor{black}{There is a proper subgroup $H$ of $G$ and a color class, say $A$,
such that  the three following conditions hold:}
\begin{enumerate}
\item[{\rm (i)}]  $A\subseteq H$, and the $3$--coloring induced in $H$ is
rainbow--free,
\item[{\rm (ii)}] \textcolor{black}{both $\widetilde{B}=B\setminus H$ and $\widetilde{C}=C\setminus H$ are
$H$--periodic sets, and}
\item[{\rm (iii)}]  $\widetilde{B}=-\widetilde{B}=2\widetilde{B}$ and
$\widetilde{C}=-\widetilde{C}=2\widetilde{C}$.
\end{enumerate}
\end{proposition}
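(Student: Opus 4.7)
The plan is to prove Proposition \ref{prop:onlyif} by induction on the number of prime factors (with multiplicity) of $n=|G|$. The base case $n$ prime is Proposition \ref{prop:prime}. For the inductive step I assume Theorem \ref{thm:main} holds for every abelian group of odd order a proper divisor of $n$, and show that any rainbow--free $3$--coloring $\{A,B,C\}$ of $G$ is $H$--regular for some proper subgroup $H<G$. Ordering the classes so that $|A|\le |B|\le |C|$, rainbow--freeness forces $(A+B)\cap(2\cdot C)=\emptyset$, and since $x\mapsto 2x$ is an automorphism of $G$,
$$|A+B|\le |G|-|C|=|A|+|B|.$$
The argument branches according to whether this inequality is strict and according to the structure of $A+B$.

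First I would treat $|A+B|\le |A|+|B|-1$. By Kneser's theorem, either $A+B$ has a nontrivial stabilizer, in which case Lemma \ref{lem:A+Bperiodic} concludes, or $|A+B|=|A|+|B|-1$ with $A+B$ aperiodic. In the latter subcase the bound $|A+B|\le |G|-|C|-1\le |G|-2$ is automatic, so Kemperman's Structure Theorem (Theorem \ref{thm:kemp}) applies and yields three alternatives: (i) $\min\{|A|,|B|\}=1$, dispatched by Lemma \ref{lem:A=1}; (ii) both $A$ and $B$ are arithmetic progressions with a common difference $d$, handled by Lemma \ref{lem:onecoset} when $d$ generates a proper subgroup of $G$ and by Lemma \ref{lem:almost_p} otherwise (with a direct rainbow--finding argument in the residual small case $|A|=3$); (iii) both $A$ and $B$ are $K$--quasiperiodic for some nontrivial proper subgroup $K<G$, finished by Lemma \ref{lem:qp}.

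Next I would treat $|A+B|=|A|+|B|$. If $A+B$ is periodic, Lemma \ref{lem:A+Bperiodic} again finishes; otherwise, assuming $|C|\ge 3$ (smaller $|C|$ forces $|A|\le 2$, handled by Lemmas \ref{lem:A=1} and \ref{lem:A=2}), the bound $|A+B|\le |G|-3$ required by Grynkiewicz's Theorem (Theorem \ref{thm:gry}) is met, giving three alternatives: (i) $\min\{|A|,|B|\}=2$ or $|A|=|B|=3$, dispatched by Lemma \ref{lem:A=2} and Lemma \ref{lem:a=b=3}; (ii) both $A$ and $B$ are $K$--quasiperiodic, finished by Lemma \ref{lem:qp}; (iii) there exist $a,b\in G$ such that $A'=A\cup\{a\}$ and $B'=B\cup\{b\}$ satisfy $|A'+B'|=|A'|+|B'|-1$. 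In this last alternative I would apply KST to the pair $(A',B')$: the size--one alternative is excluded for reasons of cardinality, the arithmetic--progression alternative reduces to an almost--progression case for $(A,B)$ already covered above, and the quasiperiodic alternative says precisely that $A$ and $B$ are almost $K$--quasiperiodic, so Lemma \ref{lem:aqp} concludes.

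The step I anticipate to be the hardest is the analysis of Grynkiewicz's third alternative: one must verify that the inherited KST structure on $(A',B')$ survives the removal of the extra points $a,b$ in exactly the form required by the hypothesis of Lemma \ref{lem:aqp}, and that the arithmetic--progression branch of KST applied to $(A',B')$ is either absorbed into the lemmas of Section \ref{sec:small} or seen to contradict rainbow--freeness directly. A secondary subtlety is the case $|A|=|B|=3$ with both sets arithmetic progressions of common difference generating all of $G$: here Lemma \ref{lem:almost_p} does not apply (requiring $|A|\ge 4$), and one closes the case by a direct search for a rainbow triple, mimicking the endgame of the proof of that lemma.
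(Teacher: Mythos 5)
Your proposal follows essentially the same route as the paper's proof of Proposition~\ref{prop:onlyif}: induction on the number of prime factors of $n$ with Proposition~\ref{prop:prime} as base case, the observation $|A+B|\le |A|+|B|$ from rainbow--freeness, reduction to the aperiodic case via Lemma~\ref{lem:A+Bperiodic}, and then the identical case split through Kneser, Kemperman and Grynkiewicz feeding into Lemmas~\ref{lem:A=1}--\ref{lem:aqp}. If anything you are more careful than the paper, since you explicitly check the hypotheses $|A+B|\le |G|-2$ and $|A+B|\le |G|-3$ of Theorems~\ref{thm:kemp} and~\ref{thm:gry} and flag the residual case of two length--$3$ arithmetic progressions not covered by the $|A|\ge 4$ hypothesis of Lemma~\ref{lem:almost_p}, points the paper passes over silently.
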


\begin{proof}
The proof is by induction on the number of  (not
necessarily distinct) primes dividing $n=|G|$. If $n$ is prime, the
statement holds by   Proposition~\ref{prop:prime}.
 \textcolor{black}{We assume} that Theorem~\ref{thm:main} holds
for any group of order a proper divisor of $n=|G|$,
\textcolor{black}{ so that we can use the results in
Section~\ref{sec:local}.}

\textcolor{black}{We first note the following remark.}

\begin{remark}\label{obs:basic}
\textcolor{black}{ For any pair of distinct color classes $X,Y\in
\{A,B,C\}$ we have  $$|X+Y|\le |X|+|Y|.$$ }
\end{remark}

\begin{proof}
\textcolor{black}{Suppose on the contrary that $|X+Y|\ge |X|+|Y|+1$ for some
distinct color classes $X,Y\in \{ A,B,C\}$. Since $n=|G|$ is odd, we have $|2\cdot
Z|=|Z|$ (where $Z$ is the remaining color class). It follows from
the condition $|X+Y|\ge |X|+|Y|+1$ that $(X+Y)\cap (2\cdot Z)\neq \emptyset$,
which implies that there is a rainbow $3$--term arithmetic progression. \qed}
\end{proof}

It follows from the above remark that
\begin{equation}\label{eq:AB}
\textcolor{black}{|A+B|\leq |A|+|B|.}
\end{equation}

By Lemma~\ref{lem:A+Bperiodic} we can assume that $A+B$ is
aperiodic. Then it follows from Kneser's theorem that $|A+B|\ge
|A|+|B|-1$. \textcolor{black}{According to (\ref{eq:AB}) we have to}
consider two cases.

{\it Case 1:} \/ $|A+B|=|A|+|B|-1$. It follows from the simplified
version of the KST,  Theorem~\ref{thm:kemp}, that one of the
following holds:
\begin{enumerate}
\item[{\rm (i)}] $\min\{ |A|,|B|\}=1$. In this case  the result follows by
Lemma~\ref{lem:A=1}.
\item[{\rm (ii)}] Both $A$ and $B$ are arithmetic progressions
with the same common difference $d$.  The result follows by
Lemma~\ref{lem:almost_p}.
\item[{\rm (iii)}] Both $A$ and $B$ are $H$--quasiperiodic for some nontrivial proper
subgroup $H<G$.  The result follows by Lemma~\ref{lem:qp}.
\end{enumerate}

{\it Case 2:} \/ $|A+B|=|A|+|B|$. Then the simplified version of the
Theorem by  Grynkiewicz,  Theorem~\ref{thm:gry}, implies  that one
of the following holds:

\begin{enumerate}
\item[{\rm (i)}]  $\min\{|A|,|B|\}=2$ or $|A|=|B|=3$. In this case the result
follows by Lemmas~\ref{lem:A=2} and~\ref{lem:a=b=3} respectively.
\item[{\rm (ii)}] Both $A$ and $B$ are $H$--quasiperiodic for some nontrivial proper
subgroup $H<G$. The result follows by Lemma~\ref{lem:aqp}.
\item[{\rm (iii)}] There are $a, b\in G$ such that $|A'+B'|=|A'|+|B'|-1$
where $A'=A\cup \{ a\}$ and $B'=B\cup \{ b\}$. According to
Kemperman's Theorem~\ref{thm:kemp}, either $A'+B'$ is periodic, in
which case $A+B$ is also periodic and the result follows by
Lemma~\ref{lem:A+Bperiodic}, or $A'$, $B'$ are both quasiperiodic,
in which case $A$ and $B$ are almost periodic and we can apply
Lemma~\ref{lem:aqp}, or $A'$, $B'$ are both arithmetic progressions
and then $A$ and $B$ are almost arithmetic progressions, a case
handled in Lemma~\ref{lem:almost_p}.
\end{enumerate}
This completes the proof of the Proposition and of Theorem \ref{thm:main}.\qed
\end{proof}

The description of rainbow--free $3$--colorings of abelian groups of
odd order can be used to prove Conjecture~\ref{cjt}.
Actually  Conjecture~\ref{cjt} holds for general abelian
groups of odd order as shown in the next Corollary.

\begin{corollary}\label{cor:cjt} Let $G$ be an abelian group of odd order $n$.
Let $p$ denote the smallest prime factor of $n$ in ${\mathcal P}_0$
and let $q$ be the smallest prime factor of $n$ in ${\mathcal P}_1$.
If $\{A,B,C\}$ is a rainbow--free $3$--coloring of $G$ then
\begin{equation}\label{eq:conjectureodd}\min\{|A|,|B|,|C|\}\leq \left\lfloor \frac{n}{\min
\{2p,q\}}\right\rfloor.\end{equation} Moreover, there are
rainbow--free $3$--colorings of $G$ for which equality holds.
\end{corollary}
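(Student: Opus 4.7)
The plan is to prove the corollary in two parts: an upper bound by induction on $n=|G|$ using Theorem~\ref{thm:main}, and a matching construction. The base case where $n$ is prime is immediate from Proposition~\ref{prop:prime}: a rainbow-free $3$-coloring forces $n\in\mathcal{P}_1$ and the existence of a singleton color class, so $\min\{|A|,|B|,|C|\} = 1 = \lfloor n/n\rfloor = \lfloor n/m\rfloor$.

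For the inductive step, apply Theorem~\ref{thm:main} to the given coloring to obtain a proper subgroup $H<G$ and a class $A\subseteq H$ satisfying conditions (i)--(iii), and split into two cases. If both $\widetilde{B}$ and $\widetilde{C}$ are non-empty, then by the argument of Proposition~\ref{prop:ifmain} the triple $\{\{0\},\widetilde{B}/H,\widetilde{C}/H\}$ is a rainbow-free $3$-coloring of $G/H$ containing a class of size $1$. Provided $H$ is non-trivial, $|G/H|<n$ and the inductive hypothesis yields $1 \leq \lfloor |G/H|/m'\rfloor$, where $m'$ is the analogous quantity $\min\{2p',q'\}$ defined for $G/H$. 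Since every prime factor of $|G/H|$ also divides $n$, $m'\geq m$, whence $[G:H]\geq m'\geq m$ and $|A|\leq |H|\leq n/m$. The degenerate case $H=\{0\}$ is handled directly: then $|A|=1$, and $1 \leq \lfloor n/m\rfloor$ follows from $n\geq m$, which is easily verified from the fact that a rainbow-free coloring exists at all.

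In the second case, one of $\widetilde{B},\widetilde{C}$ is empty; say $\widetilde{C}=\emptyset$, so $C\subseteq H$. Then $|A|+|C|\leq |H|$, giving $\min(|A|,|C|)\leq\lfloor |H|/2\rfloor$. The essential ingredient is the inequality $[G:H]\geq m/2$: since $[G:H]$ is a non-trivial divisor of $n$, we have $[G:H]\geq\min\{p,q\}$, and a short case analysis on whether $p\leq q$ or $q<p$ confirms $2\min\{p,q\}\geq\min\{2p,q\}=m$. Hence $|H|/2 \leq n/m$ and $\min\{|A|,|B|,|C|\}\leq\lfloor n/m\rfloor$.

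For the construction of extremal colorings, I would exhibit two families according to whether $m = q$ or $m = 2p$. If $q \leq 2p$, take a subgroup $K<G$ of index $q$ (which exists by the structure theorem for finite abelian groups), set $A = K$, and use that $q\in\mathcal{P}_1$ implies the action of $\langle 2,-1\rangle$ on $(\mathbb{Z}/q\mathbb{Z})^*$ has at least two orbits, so $(G/K)\setminus\{0\}$ partitions into two non-empty $\langle 2,-1\rangle$-invariant subsets whose preimages define $\widetilde{B},\widetilde{C}$. If $q > 2p$, take $K<G$ of index $p$, let $B = G\setminus K$, and split $K = A\sqcup C$ with $|A| = (|K|-1)/2$. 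Both constructions satisfy the hypotheses of Proposition~\ref{prop:ifmain} and are therefore rainbow-free, and one checks directly that each attains $\min = \lfloor n/m\rfloor$. The main obstacle is the Case~II analysis in the upper bound, where one must convert the weak bound $[G:H]\geq \min\{p,q\}$ into the stronger $[G:H]\geq m/2$ via the elementary inequality above; once that is established, the rest is routine.
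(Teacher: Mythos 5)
Your argument is correct, but the upper bound is organized quite differently from the paper's, so it is worth comparing. The paper does not induct on $n$ inside this corollary: it takes the subgroup $H$ from Theorem~\ref{thm:main}, compares $|H|$ directly with $n/q$ and $n/p$, and in the critical case $[G:H]=p\in{\mathcal P}_0$ invokes Proposition~\ref{prop:prime} on the quotient coloring to conclude that $G\setminus H$ is monochromatic, so two colors are packed into $H$ and $\min\le\lfloor n/(2p)\rfloor$. You instead split on whether $G\setminus H$ is monochromatic: when it is not, you apply the corollary inductively to $\{\{0\},\widetilde{B}/H,\widetilde{C}/H\}$ to force $[G:H]\ge m'\ge m$; when it is, the elementary bound $[G:H]\ge\min\{p,q\}\ge m/2$ together with $|A|+|C|\le|H|$ finishes with no number theory at all. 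This localizes every use of the ${\mathcal P}_0/{\mathcal P}_1$ distinction in the prime base case, and it also avoids the paper's enumeration of possible indices $a=[G:H]$ with $p<a<q$ (where the paper's blanket claim $a>2p$ is not literally true for prime $a\in{\mathcal P}_0$, although its quotient argument covers those cases); the price is that the corollary becomes a strong induction rather than a one-shot consequence of Theorem~\ref{thm:main}. Your extremal constructions coincide with the paper's up to relabeling: for $q<2p$ the paper simply lifts a rainbow--free coloring of $\Z/q\Z$ guaranteed by Theorem~\ref{thm:m=0} instead of exhibiting the $\langle 2,-1\rangle$--orbit partition, and for $2p<q$ it takes $A\cup B=H$, $C=G\setminus H$. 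Two small points you should make explicit: when one of $p,q$ does not exist it must be read as $+\infty$ both in $m=\min\{2p,q\}$ and in the monotonicity $m'\ge m$, and the rainbow--freeness of the quotient coloring in your Case~1 should be justified by lifting a rainbow progression of $G/H$ back to $G$ using the $H$--periodicity of $\widetilde{B}$ and $\widetilde{C}$ (the converse direction of what Proposition~\ref{prop:ifmain} proves).
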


\proof We first observe that (\ref{eq:conjectureodd}) is equivalent
to:
\begin{equation}\label{eq:conjectureodd2}\min\{|A|,|B|,|C|\}\leq
\max \left\{\left\lfloor \frac{n}{2p}\right\rfloor,
\frac{n}{q}\right\}.\end{equation}
Note that, since the smallest
prime factor of $n$ is either $p$ or $q$, then the largest proper
subgroup of $G$ has size either $\frac{n}{p}$ or $\frac{n}{q}$.

\textcolor{black}{By
Theorem~\ref{thm:main}~(i), there is a proper subgroup $H<G$ and  one color class, say $A$,   contained
in   $H$. Hence,    $|A|\leq |H|$.}

\textcolor{black}{If $|H|\leq \frac{n}{q}$ then
(\ref{eq:conjectureodd2}) is satisfied.}

\textcolor{black}{ Suppose that  $|H|>
\frac{n}{q}$.  Then necessarily $p<q$ and the size of the largest
proper subgroup of $G$ is $\frac{n}{p}$. Hence $|H|\leq
\frac{n}{p}$.}

\textcolor{black}{ If $|H|<\frac{n}{p}$, since $n$ is an odd number, then
$|H|=\frac{n}{a}$ where $a>2p$. Thus $|H|< \lfloor
\frac{n}{2p}\rfloor$ and  (\ref{eq:conjectureodd2}) is
satisfied.}

\textcolor{black}{Suppose that
$|H|=\frac{n}{p}$.}  Then $G/H$ is a cyclic group of
prime order $p\in {\mathcal P}_0$. By Theorem~\ref{thm:main}~(ii),
each of the two sets $\widetilde{B}=B\setminus H$ and
$\widetilde{C}=C\setminus H$ is a (possibly empty) union of
$H$--cosets. \textcolor{black}{Consider} the
$3$--coloring of $G/H$ with color classes   $A'=\{ 0\}$,
$B'=\widetilde{B}/H$ and $ C'=\widetilde{C}/H$.
\textcolor{black}{Note that, if the original coloring $A,B,C$ of $G$
is rainbow--free, then the induced coloring $A',B',C'$ of $G/H\simeq
\Z/p\Z$ is also rainbow--free. By Proposition~\ref{prop:prime},
since $p\in {\mathcal P}_0$, one of the color classes $B'$ or $C'$
must be empty. Hence either $B/H$ or $C/H$ is an empty set which
implies that}  $G\setminus H$ is monochromatic and
thus $H$ contains two colors. It follows that
$\min\{|A|,|B|,|C|\}\le \lfloor \frac{n}{2p}\rfloor$. \textcolor{black}{This completes the first part of the Corollary.}

Let us show now that there are
rainbow--free $3$--colorings of $G$ for which equality holds in
(\ref{eq:conjectureodd}).

 If $2p\le q$ then choose a subgroup $H<G$ with
cardinality $\frac{n}{p}$. Consider a partition $A\cup B=H$ where
$|A|=\lfloor\frac{n}{2p}\rfloor$, and let $C=G\setminus H$.
 This coloring clearly satisfies parts (i), (ii) and
(iii) of Theorem~\ref{thm:main} and therefore, by Proposition \ref{prop:ifmain}, it is rainbow--free.

 If $q<2p$ then choose a subgroup $H<G$ with
cardinality $\frac{n}{q}$.   We define a coloring
$A,B,C$ of $G$ as follows. Since $q\in {\mathcal P}_1$, by
Theorem~\ref{thm:m=0}, there is a rainbow--free $3$--coloring of
$\Z/q\Z$ with nonempty color classes $A',B'$ and $C'$. Let
$\pi:G\rightarrow G/H\simeq \Z/q\Z$ denote the natural projection
and define $A=\pi^{-1}(A')$, $B=\pi^{-1}(B')$ and $C=\pi^{-1}(C')$.
By Proposition~\ref{prop:prime} this  coloring
satisfies parts (i), (ii) and (iii) of Theorem~\ref{thm:main} and
therefore, by Proposition \ref{prop:ifmain}, it is rainbow--free. \qed

\section{The even case}\label{sec:even}

{\color{black}{In this Section we shall prove Conjecture~\ref{cjt} for {\it cyclic}
groups $\Z/n\Z$ of even order. We recall that, by Theorem \ref{thm:m=0}, if $n=2^m$ then
there are no rainbow--free $3$--colorings of $\Z/n\Z$. Therefore, throughout this Section, we assume that $n=2^ml$ for some $m\ge 1$ and odd $l> 1$. }}

{\color{black}{We start with a Lemma   which will be useful later on.}}

\begin{lemma}\label{lem:Kjust} Let $\{ A,B,C\}$ be a rainbow--free $3$--coloring of {\color{black}{a cyclic}}
group $G$. Suppose that there is a subgroup $H$ such that one of the
colors, say $A$, is contained in $H$, and each of
$\widetilde{B}=B\setminus H$ and $\widetilde{C}=C\setminus H$ are
$H$-periodic.

There is a proper subgroup $K$ of $G$  containing $H$  such that
$$
B+C\supseteq G\setminus K
$$
and each of $B\setminus K$ and $C\setminus K$ is $K$--periodic.
\end{lemma}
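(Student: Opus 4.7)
The plan is to pass to the quotient $\ov G := G/H$ via the natural projection $\pi\colon G\to\ov G$ and work there. Because $\widetilde B,\widetilde C$ are $H$-periodic with $\widetilde B\cup\widetilde C = G\setminus H$, the images $\ov{B_1}:=\widetilde B/H$ and $\ov{C_1}:=\widetilde C/H$ are disjoint subsets of $\ov G\setminus\{0\}$ whose union is $\ov G\setminus\{0\}$, and $\ov B:=\pi(B), \ov C:=\pi(C)$ contain $0$ precisely when $B\cap H$, respectively $C\cap H$, is non-empty. For any subgroup $\ov K\le\ov G$, the lift $K:=\pi^{-1}(\ov K)$ contains $H$, and since $(B+C)\setminus H$ is a union of $H$-cosets, the two conclusions of the Lemma are equivalent respectively to (a)~$\ov B+\ov C\supseteq\ov G\setminus\ov K$ and (b)~$\ov{B_1}\setminus\ov K$ and $\ov{C_1}\setminus\ov K$ are $\ov K$-periodic in $\ov G$. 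So the task reduces to finding a proper $\ov K\le\ov G$ satisfying (a) and (b).

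If both $B\cap H$ and $C\cap H$ are non-empty, then $0\in\ov B\cap\ov C$ gives $\ov B+\ov C\supseteq\ov{B_1}\cup\ov{C_1}=\ov G\setminus\{0\}$; taking $\ov K=\{0\}$ (that is, $K=H$) verifies (a) and (b) at once. Otherwise, assume without loss of generality that $B\cap H=\emptyset$. I would apply the 3-cosets Lemma (Lemma~\ref{lem:3cosets}) to a triple $(X,Y,Z)=(H,\,y+H,\,z+H)$ of $H$-cosets in arithmetic progression with $y\in\widetilde B, z\in\widetilde C$ (so $\ov y=2\ov z$ in $\ov G$): since $|Y_B|=|H|$ and $|X_A|=|A|\ge 1$, the bound $|X_A|+|Y_B|\le|H|$ would fail, a contradiction. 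Hence no such pair $(\ov y,\ov z)$ exists, yielding $2\ov{C_1}\cap\ov{B_1}=\emptyset$ and, by symmetry, $2\ov{B_1}\cap\ov{C_1}=\emptyset$; equivalently, $\ov{B_1}\cup\{0\}$ and $\ov{C_1}\cup\{0\}$ are each closed under doubling in $\ov G$.

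Next I would apply Kneser (Theorem~\ref{thm:knes}) to $\ov B+\ov C$ in $\ov G$. Writing $\ov P:=P(\ov B+\ov C)$, the bound $|\ov B+\ov C|\ge|\ov B|+|\ov C|-|\ov P|\ge|\ov G|-1-|\ov P|$ combined with the $\ov P$-periodicity of $\ov D:=\ov G\setminus(\ov B+\ov C)$ forces $\ov D$ to be empty or (when $|\ov P|\ge 2$) a single $\ov P$-coset. Define $\ov K$ to be the subgroup of $\ov G$ generated by $\ov P\cup\ov D$; condition (a) is then immediate. Properness of $\ov K$ will follow from the non-emptiness of all three color classes (which prevents $\ov{B_1}$ or $\ov{C_1}$ from collapsing into $\ov K$) together with cyclicity of $G$, whose subgroups form a chain under divisibility. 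The main obstacle is verifying condition (b): I would show that each $\ov K$-coset outside $\ov K$ is contained entirely in $\ov{B_1}$ or entirely in $\ov{C_1}$, by using the doubling-closure of $\ov{B_1}\cup\{0\}$ and $\ov{C_1}\cup\{0\}$ in conjunction with the Kemperman and Grynkiewicz Structure Theorems (Theorems~\ref{thm:kemp} and~\ref{thm:gry}) applied to the near-extremal pair $(\ov B,\ov C)$; a $\ov K$-coset split between the two classes would inflate $|\ov D|$ beyond the bound just derived, yielding the required contradiction.
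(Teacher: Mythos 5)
Your reduction to the quotient $\ov{G}=G/H$, the dispatch of the case where both $B$ and $C$ meet $H$, the observation that $2\cdot\widetilde{C}/H$ avoids $\widetilde{B}/H$ (so that each of $\widetilde{B}/H\cup\{0\}$ and $\widetilde{C}/H\cup\{0\}$ is closed under doubling), and the Kneser step bounding $\ov{D}=\ov{G}\setminus(\ov{B}+\ov{C})$ all match the skeleton of the paper's argument (its Case 1, reached from Case 2 by the same quotient passage). But the proof stops exactly where the real difficulty begins. In the critical subcase $\ov{P}=\{0\}$, $\ov{D}=\{0,x\}$, your $\ov{K}=\langle\ov{P}\cup\ov{D}\rangle=\langle x\rangle$ comes with no a priori reason to be a \emph{proper} subgroup, and "non-emptiness of the color classes plus cyclicity" does not supply one: properness is a genuine theorem here, not a formality. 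Likewise, the mechanism you propose for condition (b) -- that a $\ov{K}$-coset split between the two classes "would inflate $|\ov{D}|$" -- is unsubstantiated; a split coset lives inside $\ov{B}+\ov{C}$ and has no evident effect on $\ov{G}\setminus(\ov{B}+\ov{C})$. Finally, the structure theorems you want to invoke are not available in the generality needed: this Lemma is applied in the proof of Theorem~\ref{thm:mainodd} to $P_0\cong L\oplus\Z/2^{m-1}\Z$, a cyclic group of \emph{even} order, whereas Theorem~\ref{thm:gry} is stated only for odd order and Lemma~\ref{lem:3cosets} likewise (the particular consequence you extract from the $3$-cosets Lemma can be re-derived directly by lifting a progression of cosets, but Grynkiewicz cannot be so rescued).

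What actually closes the gap in the paper is an elementary argument special to this situation: from $x\notin B+C$ together with $B=-B$ and $C=-C$ one gets $\{0,x\}+B\subseteq G\setminus C$, hence $|\{0,x\}+B|\le|B|+1$; decomposing $B$ into maximal arithmetic progressions of difference $x$ then forces all but at most one of them to be full $\langle x\rangle$-cosets, the exceptional progression being confined to the coset $K=\langle x\rangle$ itself (and symmetrically for $C$). The impossibility of partitioning $K\setminus\{0\}$ into two symmetric arithmetic progressions then shows only one color meets $K$, which yields both the $K$-periodicity of $B\setminus K$ and $C\setminus K$ and, since both colors are non-empty, the properness of $K$ in one stroke. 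Your proposal needs this (or an equivalent) argument; as written, the two conclusions of the Lemma are asserted rather than proved.
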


\begin{proof}
We consider two cases.

{\it Case 1\/}:  $H=\{ 0\}$. We have $A=\{ 0\}$ \color{black}{and $|B|+|C|=|G|-1$.}

If $|B+C|=|B|+|C|=|G|-1$ then we can choose $K=\{ 0\}$.

Suppose that $|B+C|=|B|+|C|-1$ and let $\{0,x\}=G\setminus (B+C)$.
\color{black}{Since the coloring is rainbow--free we  have $-B=B$ and $-C=C$.
Since $x\not\in B+C$ we have $x-B\cap C=\emptyset$ and $x-C\cap B=\emptyset$.
 Hence}
$$
\{ 0,x\}+B=\{ 0,x\}-B\subset G\setminus C.
$$
{\color{black}{It follows that}}
\begin{equation}\label{eq:b+1}
|\{ 0,x\}+B|\le |B|+1.
\end{equation}
{\color{black}{Let $K=\langle x\rangle$ be the cyclic subgroup generated by $x$ and let
$B=B_1\cup \cdots \cup B_t$
be a decomposition of $B$ into maximal arithmetic progressions with difference $x$.
By the maximality of each $B_i$ we have}} $$B_i+\{ 0,x\}=\min\{|K|, |B_i|+1\}.$$
{\color{black}{By \eqref{eq:b+1}, we see that all but at most  one of the $B_i$'s  are cosets of $K$. Moreover,
if there is one of the $B_i$'s, say $B_1$, which is not a $K$--coset, then
 $B_1$ is an arithmetic progression of difference $x$ properly contained in one $K$--coset.}}

{\color{black}{Likewise $\{0,x\}+C\subset G\setminus B$ implies the analogous structure for $C$. Since none of $B$ and
$C$ contains the whole subgroup $K$,   the only coset where the
proper arithmetic progressions can sit in is $K$ itself. If both colors meet $K$ then we have
a rainbow--free $3$--coloring of this cyclic group with all three colors arithmetic progressions. But we can not partition $K\setminus \{ 0\}$ into two arithmetic progressions $B',C'$ with $B'=-B$ and $C'=-C$. Therefore only one of the two colors meets $K$. This shows that $K$ is a proper subgroup of $G$. Moreover,
$B\setminus K$ and $C\setminus K$ are $K$--periodic.
By the definition of $x$, we also have $B+C\supset G\setminus K$. }}

Finally suppose that $|B+C|< |B|+|C|-1$. By Kneser's theorem  there
is a proper subgroup $K<G$ such that $B+C$ is $K$--periodic and
$|B+C|=|B+K|+|C+K|-|K|$. Since $0\not\in B+C$ we have $B+C\subset
G\setminus K$. Hence,
$$
|G|-1=|B|+|C|\le |B+K|+|C+K|\le |G|.
$$
The last inequalities imply that one of the sets, say $B$, satisfies $B=B+K$ and  the second one
satisfies $|C|=|C+K|-1$. Thus $B$ is $K$--periodic, $C+K=C\cup \{ 0\}$ and $C\setminus K$ is $K$--periodic or empty,
and $B+C=G\setminus K$.

{\it Case 2\/}: $H\neq \{ 0\}$.

 Suppose first that
$\widetilde{C}=G\setminus H$, so that $\widetilde{B}=\emptyset$ and
$B\subset H$. Then $B+C=C$ and the statement holds with $K=H$.

Suppose now that both $\widetilde{B}$ and $\widetilde{C}$ are
nonempty. Then   $\{A'= A/H, B'=\widetilde{B}/H,
C'=\widetilde{C}/H\}$ is a rainbow--free $3$--coloring of $G/H$. It
follows from Case 1 that there is a proper subgroup $K$ of $G$
containing $H$ such that $B'\setminus (K/H)$ and $C'\setminus (K/H)$
are $K$--periodic and $A'+B'\supseteq (G/H)\setminus (K/H)$. It
follows that each of $B\setminus K$ and $C\setminus K$ are
$K$--periodic (one of the two may be empty) and $B+C\supseteq
G\setminus K$. \qed\end{proof}

Let $n=2^ml$ with $l>1$ odd and $m\ge 1$. Then the cyclic group
$G=\Z/n\Z$ can be written as
$$G=L\oplus \Z/2^m\Z$$ where $L$ has odd order $l$ and $m\ge 1$.

As
usual $\{A,B,C\}$ denotes a rainbow--free $3$--coloring of $G$. Let
$P_0=2\cdot G$. Since the even factor of $G$ is cyclic we have
$$P_0\cong L\oplus \Z/2^{m-1}\Z.$$ For each color $X\in \{ A,B,C\}$
we write $X_0=X\cap P_0$ and $X_1=X\cap P_1$ where $P_1$ is the
second  coset of $P_0$ in $G$.

\begin{lemma}\label{lem:monocoset} With the assumptions above, none of the two cosets of $P_0$ is monochromatic.
\end{lemma}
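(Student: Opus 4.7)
The plan is to argue by contradiction. Up to translation we may assume $P_0\subseteq A$, so that both $B$ and $C$ are nonempty subsets of $P_1$. The starting observation is that for any $b\in B$ and $c\in C$, the sum $b+c$ lies in $P_0=2\cdot G$, so the equation $2z=b+c$ has exactly two solutions in $G$, namely $z_1$ and $z_2=z_1+n/2$. Rainbow--freeness forces $z_1,z_2\notin A$, for otherwise $(b,z_i,c)$ would be a rainbow $3$--term arithmetic progression; equivalently, both midpoints lie in $B\cup C\subseteq P_1$.

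The heart of the argument is an induction on $k\in\{1,\ldots,m\}$ showing that $B\cup C$ is contained in a single coset $2^kG+s_k$ of $2^kG$. The base case $k=1$ is just $B\cup C\subseteq P_1=2G+1$. For the inductive step with $k<m$, since $n/2=2^{m-1}l$ lies in $2^kG$, the two midpoints share a common $2^kG$--coset; writing $b=s_k+2^k\widetilde b$, $c=s_k+2^k\widetilde c$ and computing $2z_1=2s_k+2^k(\widetilde b+\widetilde c)$, the condition $z_1\in 2^kG+s_k$ reduces to $\widetilde b+\widetilde c$ being even. Imposing this for every pair $(b,c)\in B\times C$ forces all representatives $\widetilde b,\widetilde c$ to share a common parity, placing $B\cup C$ inside a single coset of $2^{k+1}G$.

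The contradiction appears at $k=m$. Here $B\cup C\subseteq 2^mG+s_m=L+s_m$, since $L=2^mG$ coincides with the odd--order subgroup of $G$. Crucially $n/2=2^{m-1}l\notin L$ because $l$ is odd, so the two midpoints $z_1$ and $z_2=z_1+n/2$ lie in \emph{distinct} cosets of $L$. At least one of them therefore falls outside $L+s_m$, and hence belongs to $A\supseteq G\setminus(L+s_m)$, producing a rainbow $3$--term arithmetic progression $(b,z_i,c)$ and the desired contradiction.

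The main obstacle is the case $m\ge 2$: when $n/2\in P_0$, the one--step midpoint argument (which handles $m=1$ immediately, since then $n/2$ is odd and one midpoint lies directly in $P_0\subseteq A$) fails because both midpoints remain in $P_1$. The inductive refinement above circumvents this by pushing $B\cup C$ into successively deeper $2$--adic cosets, until the oddness of $l$ finally forces the midpoints to split across different cosets of $L$.
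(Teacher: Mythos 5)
Your argument is correct and is in essence the paper's own: rainbow--freeness forces the two colors outside the monochromatic coset into successively deeper cosets of the filtration $G\supset 2G\supset\cdots\supset 2^mG=L$, and the oddness of $l$ finally stops the descent. The paper packages this $2$--adic descent as a minimal--counterexample reduction to the subgroup $P_0$ (placing the monochromatic coset at $P_1$ and observing $A+B\subseteq 2\cdot P_0$), whereas you run it as a direct induction on the coset depth inside $G$; the underlying computation is the same.
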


\begin{proof} Suppose the contrary and choose the minimal $m$ for
which there is a counterexample to the statement. We may assume that
$P_1=C_1$. Since $A+B\subset P_0\setminus 2\cdot C_1=P_0\setminus
2\cdot P_1$ we have $m\ge 2$ (otherwise $2\cdot P_1=P_0$) and $A+B$
is contained in the proper subgroup $2\cdot P_0$ of $P_0$. Thus
$A\cup B$ is contained in one coset of $2\cdot P_0$ and  the second
coset of this subgroup in $P_0$  must be colored only with $C$
contradicting the minimality of $m$.\qed
\end{proof}

We next  give the structural result analogous to Theorem
\ref{thm:main} for cyclic groups of even
order.

\begin{theorem}\label{thm:mainodd} Let $G=L\oplus \Z/2^m\Z$
where $L$ has odd order and $m\ge 0$. There is a proper subgroup
$H'$ of $L$ such that one of the colors, say $A$, is contained in
one coset of $H=H'\oplus \Z/2^m\Z$ and each of $B\setminus H$ and
$C\setminus H$ is $H$--periodic.
\end{theorem}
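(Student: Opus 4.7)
The plan is to induct on $m$. The base case $m=0$ is exactly Theorem~\ref{thm:main} applied to the odd--order group $G=L$, with $H$ identified with $H'$. For $m \geq 1$, I restrict the rainbow--free 3--coloring $\{A,B,C\}$ to $P_0 = 2G \cong L \oplus \Z/2^{m-1}\Z$. Since every 3--term arithmetic progression lying in $P_0$ is a 3--AP in $G$, the induced coloring $\{A_0, B_0, C_0\}$ is rainbow--free on $P_0$ (possibly with empty classes), and by Lemma~\ref{lem:monocoset} at least two colors appear in each of $P_0$ and $P_1$. The argument splits into the cases where $P_0$ is 3--chromatic or 2--chromatic.

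In the 3--chromatic case I apply the inductive hypothesis (when $m > 1$) or Theorem~\ref{thm:main} (when $m = 1$) to $\{A_0,B_0,C_0\}$, obtaining a proper subgroup $H' < L$ and, up to relabeling, a color $A$ with $A_0 \subseteq a_0 + H_0$ for some $a_0 \in P_0$, where $H_0 := H' \oplus \Z/2^{m-1}\Z$; further, $B_0 \setminus (a_0 + H_0)$ and $C_0 \setminus (a_0 + H_0)$ are $H_0$--periodic. Setting $H := H' \oplus \Z/2^m\Z$, a proper subgroup of $G$ with $H \cap P_0 = H_0$, I then lift the structure to $G$: for any $x, z \in P_1$ there exists $y \in P_0$ such that $(x, y, z)$ is a 3--AP, and the color of $y$ is determined by the $H_0$--structure on $P_0$. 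Playing rainbow--freeness against this structure should force $A_1 \subseteq a_0 + H$ and the required $H$--periodicity of $B_1 \setminus H$ and $C_1 \setminus H$. In the 2--chromatic case, without loss of generality $A \subseteq P_1$ so $A_0 = \emptyset$, and Lemma~\ref{lem:monocoset} applied to $P_1$ guarantees $B_1 \cup C_1 \ne \emptyset$. Here a direct analysis of 3--APs $(a, y, a')$ with $a, a' \in A$ (which force $y \in P_0$) together with 3--APs straddling $A$ and $P_0$ constrains the 2--coloring $\{B_0, C_0\}$ of $P_0$; Theorem~\ref{thm:main} applied to a suitable projection to $L$, possibly supplemented by Lemma~\ref{lem:Kjust}, then identifies the subgroup $H' < L$.

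The main obstacle is the lifting step of the 3--chromatic case: converting the $H_0$--coset structure on $P_0$ (given by induction) into an $H$--coset structure on all of $G$. This requires systematically exploiting rainbow--freeness of 3--APs that straddle the two cosets of $P_0$, since such progressions tie the colors of pairs of elements in $P_1$ to the $H_0$--structure on $P_0$ through a shared midpoint. The 2--chromatic case is also delicate, as the induction on $P_0$ only sees a 2--coloring there; identifying the correct $H'$ must come from the interaction between $A \subseteq P_1$ and the other two colors via rainbow--freeness, and may require a bespoke case analysis beyond what the inductive setup provides.
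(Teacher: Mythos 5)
Your skeleton coincides with the paper's: induction on $m$, restriction of the coloring to $P_0=2\cdot G$, Lemma~\ref{lem:monocoset}, a case split according to how many colors each coset of $P_0$ meets, and an appeal to the inductive hypothesis (or to Theorem~\ref{thm:main} when $m=1$) inside $P_0$. But the proposal stops exactly where the proof begins: the step you yourself flag as ``the main obstacle'' --- converting the $H_0$--structure on $P_0$ into an $H$--structure on $G$ --- is the entire content of the argument, and the mechanism you sketch for it does not work. First, your plan to set $H=H'\oplus\Z/2^m\Z$ with $H'$ the subgroup delivered by induction and then ``force'' $A_1$ into a coset of $H$ is not achievable in general: the inductive subgroup reflects only the structure of $A_0,B_0,C_0$ inside $P_0$ and may be too small to capture $A_1$. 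The paper must enlarge it; in the trichromatic case it applies Lemma~\ref{lem:Kjust} to the pair $(B_0,C_0)$ to produce a possibly larger subgroup $T_0\supseteq H_0$ with $B_0+C_0\supseteq P_0\setminus T_0$, and only then does the containment $2\cdot A_1\subseteq P_0\setminus (B_0+C_0)\subseteq T_0$ place $A_1$ inside $H=T_0'\oplus\Z/2^m\Z$. Nothing in your outline produces this enlargement.

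Second, the lifting device you propose --- progressions $(x,y,z)$ with $x,z\in P_1$ and midpoint $y\in P_0$ --- is not even available when $m\ge 2$: if $x+z\in P_0\setminus 2\cdot P_0$ then both solutions of $2y=x+z$ lie in $P_1$. The paper obtains the periodicity of $B_1$ and $C_1$ not by chasing individual progressions but by sumset arguments: when $P_1$ is bichromatic, Lemma~\ref{lem:ph}(ii) applied to $B_1+C_1\subseteq P_0\setminus 2\cdot A$ with $|B_1|+|C_1|=|P_0|$ yields a subgroup $H_1$ with $B_1,C_1$ both $H_1$--periodic and $2\cdot A$ confined to a single $H_1$--coset; in the trichromatic case a coset--propagation argument (a monochromatic coset $T_0+(2x,0)$ forces $T_0+(x,1)$ to be monochromatic) does the job, followed by a final upgrade from $T_0$--periodicity to $H$--periodicity using $2\cdot X\subseteq X\cup\{0\}$ in the quotient. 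None of these ingredients, nor substitutes for them, appear in the proposal, so as it stands it is a plan rather than a proof.
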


\begin{proof}
By Lemma \ref{lem:monocoset} we may assume that none of the two
cosets of $P_0$ is monochromatic. The proof is by induction on $m$.
By Theorem \ref{thm:main} the result follows for $m=0$. Assume $m\ge
1$.  We consider three cases.

{\it Case 1\/}: One of the two cosets of $P_0$ is bichromatic.

We may assume that $P_1=B_1\cup C_1$ and $0\in A_0=A$. Thus
$$|B_1|+|C_1|=|P_0| \; \mbox{ and }\; B_1+C_1\subset P_0.$$
 \color{black}{Since $c$ is rainbow--free we have  $2\cdot A\subset P_0\setminus (B_1+C_1)$.
 In particular, $B_1+C_1\neq P_0$. It follows from Lemma
\ref{lem:ph}~(ii)    that there
is a proper subgroup $H_1$ of $P_0$ such that $2\cdot A\subseteq
H_1$, and that both $B_1$ and $C_1$ are $H_1$--periodic. Let
$$H_1=H'_1\oplus \Z/2^{m_1}\Z,$$ where $H'_1<L$ has odd order and
$\Z/2^{m_1}\Z$ denotes the cyclic subgroup of $\Z/2^m\Z$ of order $2^{m_1}$ for some $m_1\le m$. We next
consider two cases according to $P_0$ being bichromatic or
trichromatic}.

{\it Case 1.1\/}: $P_0$ is bichromatic. We may assume that
$P_0=A_0\cup B_0$. Since $|A_0|+|B_0|=|P_0|$ and $2\cdot C_1\subset
P_0\setminus (A_0+B_0)$, it follows from Lemma \ref{lem:ph}~(ii)  that
there is a proper subgroup $H_0$ of $P_0$ such that $2\cdot C_1$ is
contained in a single coset of $H_0$, and that both $A_0$ and $B_0$
are $H_0$--periodic. Let $$H_0= H'_0\oplus \Z/2^{m_0}\Z,$$   with
$H'_0<L$ of odd order.

Now $C_1$ being $H_1$--periodic and $2\cdot C_1$ contained in a
single coset of $H_0$ implies $H'_1\le H'_0$. By the analogous
argument on $A_0$ we get $H'_0\le H'_1$. Thus $H'_0=H'_1$ and, by
symmetry, we may assume $H_0\le H_1$. Therefore each color class is
$H_0$--periodic.

It follows that $H'_0$ is a proper subgroup of $L$ since otherwise
we get the rainbow--free $3$--coloring $\{ A/L, B/L, C/L\}$ of the
cyclic group $G/L$ of order $2^m$, contradicting Theorem \ref{thm:m=0}.

Consider the subgroup  $H=H_0'\oplus \Z/2^m\Z$. Observe that $H$
contains  $A_0$ since   $2\cdot A_0\subset H_1\le H$ and the two
subgroups $H_1$ and $H$ have the same odd factor. Similarly, since
$2\cdot C_1\subset H_0$ the color $C_1$ is also contained in $H$.
Hence $B$ does not intersect $H$, since otherwise, as all color
classes are $H'_0$--periodic,  we would get the rainbow-free
$3$--coloring $$\{ (A_0\cap H)/H'_0, (B\cap H)/H'_0, (C\cap H)/H'_0)\}, $$
of the cyclic group $G'/H'_0$ of order $2^m$ contradicting again
Theorem \ref{thm:m=0}. Thus $B=G\setminus H$ and the statement of the
Theorem holds with $H'=H'_0$.

{\it Case 1.2\/}: $P_0$ is trichromatic. By the induction hypothesis
there is a subgroup $$H_0=H'_0\oplus \Z/2^{m-1}\Z,$$ such that
$A\subseteq H_0$ and $B_0\setminus H_0$ and $C_0\setminus H_0$ are
$H_0$--periodic.  Choose a minimal $H'_0$ with this property. We may
assume that $C_0\setminus H_0$ is nonempty. Since $$2\cdot A\subset
H'_1\oplus \Z/2^{m_1}\Z,$$ we have $A\subset H'\oplus \Z/2^{m-1}\Z$
with $H'=H'_0\cap H'_1$. By the minimality of $H'_0$ we have
$H'=H'_0\le H'_1$. Thus,
$$
A\subset H_0.
$$

Suppose that, for some $x\in L\setminus \{ 0\}$, the
coset $X=H_0+(2x,0)$ is colored $B$.  Since  $X\subset A_0+B_0$ is disjoint
from $2\cdot C$,  then the coset $H_0+(x,1)$ is also
monochromatic of  color $B$.

By switching the roles of $B$ and $C$
we conclude that $B_1$ and $C_1$ are also $H_0$--periodic. Let
$$H=H'_0\oplus \Z/2^m\Z.$$

 If $C=G\setminus H$   the statement holds
with $H$ and we are done. Otherwise the $3$--coloring  $$\{ A/H_0,
B/H_0, C/H_0\},$$ is rainbow--free with the color $A'=A/H_0$ consisting only of zero.
Since the coloring is rainbow--free, every color $X$ satisfies $2\cdot
X\subset X\cup \{ 0\}$. This implies that each of $B\setminus H$ and
$C\setminus H$ are not only $H_0$--periodic but in fact
$H$--periodic. This concludes this case.

{\it Case 2\/}: Both cosets of $P_0$ are trichromatic.

By the induction hypothesis there is a  subgroup $$H_0=H'_0\oplus
\Z/2^{m-1}\Z$$ of $P_0$ such that $A_0$ is contained in a single
coset $H_0$ and $B_0\setminus H_0$ and $C_0\setminus H_0$ are
$H_0$--periodic. Choose a minimal $H'_0$ with this property.

It follows from Lemma \ref{lem:Kjust} that there is a proper
subgroup $$T_0=T'_0\oplus \Z/2^{m-1}\Z<P_0$$ containing $H_0$ such
that $B_0+C_0\supset P_0\setminus T_0$ and each of $B_0\setminus
T_0$ and $C_0\setminus T_0$ is $T_0$--periodic.

We have $$2\cdot A_1\subset P_0\setminus (B_0+C_0),$$ so that $2\cdot
A_1\subset T_0$. It follows that  $A_1\subset H$ with $$H=T_0'\oplus
\Z/2^m\Z.$$
 Thus $$A\subset H.$$
  We now use a similar argument to Case
1.2. For each $x\in L\setminus  T_0$ the coset $X=T_0+(2x,0)\subset
P_0\setminus T_0$ is monochromatic and, since the coloring is
rainbow--free, so that $X= A_0+X$ is disjoint from $T_0+(2x,1)$, the
coset $T_0+(x,1)$ is also monochromatic. Hence each of $B_1\setminus
(T_0+A_1)$ and $C_1\setminus (T_0+A_1)$ is also $T_0$--periodic.
Hence, either $G\setminus H$ is monochromatic and we are done, or
$\{ A/H, B/H, C/H\}$ is a rainbow--free $3$--coloring of $G/H$ with
the color class $A'=A/H$ consisting only of zero.
Hence, every color $X$ satisfies $2\cdot X\subset X\cup \{0\}$. This implies that
each of $B\setminus H$ and $C\setminus H$ are not only
$T_0$--periodic but in fact $H$--periodic. This completes the proof.
\qed
\end{proof}

Theorem \ref{thm:mainodd} provides a proof of Conjecture \ref{cjt}.
The proof is completely analogous to the one in Corollary
\ref{cor:cjt} for the case of abelian groups of odd order except
that we invoke Theorem \ref{thm:mainodd} instead of Theorem
\ref{thm:main}.

\begin{corollary} Let $G$ be cyclic group of   order $n$.
Let $p$ denote the smallest odd prime factor of $n$ in ${\mathcal
P}_0$ and let $q$ be the smallest odd prime factor of $n$ in
${\mathcal  P}_1$. If $\{A,B,C\}$ is a rainbow--free $3$--coloring
of $G$ then
\begin{equation}\label{eq:conjodd}\min\{|A|,|B|,|C|\}\leq \left\lfloor \frac{n}{\min
\{2p,q\}}\right\rfloor.\end{equation} Moreover, there are
rainbow--free $3$--colorings of $G$ for which equality holds.
\end{corollary}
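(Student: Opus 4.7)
The plan is to invoke Theorem~\ref{thm:mainodd} and follow the proof of Corollary~\ref{cor:cjt} essentially verbatim. For odd $n$ the statement is Corollary~\ref{cor:cjt} itself, so assume $n=2^m l$ with $m\ge 1$ and $l>1$ odd. Every prime in $\mathcal{P}_0\cup\mathcal{P}_1$ divides $l$, so $n/(2p)$ and $n/q$ are integers whenever defined and the claim reduces to
$$
\min\{|A|,|B|,|C|\}\le \max\left\{\frac{n}{2p},\ \frac{n}{q}\right\}.
$$

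First I would apply Theorem~\ref{thm:mainodd} to obtain a proper subgroup $H=H'\oplus \Z/2^m\Z$ with $H'<L$ such that, up to translation, one color class --- say $A$ --- lies in a single coset of $H$, and both $\widetilde B=B\setminus H$ and $\widetilde C=C\setminus H$ are $H$--periodic. Thus $|A|\le |H|$ and the index $[G:H]=l/|H'|$ is odd. Now I split on $[G:H]$. If $[G:H]\ge q$ then $|A|\le n/q$ and we are done. If $[G:H]<q$ has at least two prime factors counted with multiplicity, every prime divisor of $[G:H]$ is $<q$ hence in $\mathcal{P}_0$; each is therefore $\ge p\ge 3$, so $[G:H]\ge p^2\ge 3p$ and $|A|\le n/(3p)<n/(2p)$.

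The remaining case is $[G:H]=p^{\ast}\in\mathcal{P}_0$ with $p\le p^{\ast}<q$. The $H$--periodicity of $\widetilde B,\widetilde C$ ensures that $A'=\{0\}$, $B'=\widetilde B/H$, $C'=\widetilde C/H$ is a $3$--coloring of $G/H\cong\Z/p^{\ast}\Z$, and it is rainbow--free: a rainbow quotient triple $(\{0\},Y,Z)$ with $\{0\}+Y=2Z$ in $G/H$ would, by choosing $c\in Z$ and any $a\in A\subset H$, lift to $(a,2c-a,c)$ in $G$ with $2c-a\in 2Z+H=Y\subset \widetilde B$, contradicting rainbow--freeness of $\{A,B,C\}$. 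By Proposition~\ref{prop:prime} no rainbow--free $3$--coloring of $\Z/p^{\ast}\Z$ with all three classes non--empty exists, so $B'$ or $C'$ must be empty, $G\setminus H$ is monochromatic, $H$ contains two color classes, and $\min\{|A|,|B|,|C|\}\le |H|/2=n/(2p^{\ast})\le n/(2p)$.

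For the optimality assertion I would reproduce the two extremal families from Corollary~\ref{cor:cjt}: when $2p\le q$, let $H<G$ be the unique subgroup of index $p$, partition $H=A\cup B$ with $|A|=n/(2p)$, and set $C=G\setminus H$; when $q<2p$, take $H<G$ of index $q$ and lift through $\pi\colon G\to G/H\cong\Z/q\Z$ any rainbow--free $3$--coloring of $\Z/q\Z$ with a singleton class, as supplied by Theorem~\ref{thm:m=0} and Proposition~\ref{prop:prime} since $q\in\mathcal{P}_1$. The only (minor) obstacle is that Theorem~\ref{thm:mainodd} has no stated converse companion, so the rainbow--freeness of these two constructions must be verified by hand: since $x\mapsto 2x$ is a bijection on $G/H\cong\Z/p\Z$, any rainbow $3$--AP in the first construction would project to a triple in $G/H$ with two zero entries and one non--zero, which cannot satisfy $x+y=2z$; for the second, a rainbow $3$--AP in $G$ would project to a rainbow $3$--AP in $G/H$, contradicting the chosen coloring downstairs.
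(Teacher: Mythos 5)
Your proof is correct and follows the route the paper itself prescribes for this corollary: the paper gives no separate argument but declares the proof ``completely analogous'' to that of Corollary~\ref{cor:cjt} with Theorem~\ref{thm:mainodd} invoked in place of Theorem~\ref{thm:main}, which is exactly what you carry out. Your case split on $[G:H]$ (at least $q$, composite below $q$, or a single $\mathcal{P}_0$--prime $p^{\ast}$ below $q$) and your direct verification of rainbow--freeness of the two extremal colorings are welcome refinements rather than a different approach, since the paper's odd--order template is slightly imprecise at the step ``$|H|=n/a$ with $a>2p$'' (which can fail when $a=p^{\ast}\in\mathcal{P}_0$ with $p<p^{\ast}<2p$, the very case your third branch handles) and its optimality check leans on Proposition~\ref{prop:ifmain}, which is stated only for odd order.
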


\section*{Acknowledgements}

The authors are very grateful to Yahya O. Hamidoune for fruitful and
colorful discussions on the problem considered in this paper.


\begin{thebibliography}{}

\bibitem{axenovich} M. Axenovich, D. Fon-Der-Flaass. \textit{On rainbow arithmetic
progressions}. Electronic Journal of Conbinatorics, 11:R1, 2004


\bibitem{conlon} D. Conlon, V. Jungi\'c,  and R. Radoi\v ci\'c,  On the existence of rainbow $4$--term arithmetic progressions, {\it Graphs Combin.} {\bf  23} (2007), no. 3, 249–-254.

\bibitem{erdos} P. Erd\H{o}s, M. Simonovits, V. S\'os,  Anti-Ramsey theorems.
Infinite and finite sets, Colloq. Math. Soc. Janos Bolyai, Vol.
\textbf{10} {\em Discrete Math.}, (1975), 633--643.

\bibitem{hamidoune} Y.O. Hamidoune, Hyper-atoms and the Kemperman's critical pair Theory,   arXiv:0708.3581v1 [math.NT] (2007).


\bibitem{jungic} Jungi\'c, Veselin; Licht, Jacob; Mahdian, Mohammad; Ne\v set\v ril, Jaroslav; Radoi\v ci\'c, Rado\v s
Rainbow arithmetic progressions and anti-Ramsey results.
\emph{Combin. Probab. Comput.} \textbf{12} (2003), 599--620.

\bibitem{jnr} Jungi\'c, Veselin; Ne\v set\v ril, Jaroslav; Radoi\v ci\'c, Rado\v s
Rainbow Ramsey Theory. \emph{Integers: Electronic Journal of
Combinatorial Number Theory} \textbf{5(2) A9}. (2005).

\bibitem{kemperman}  J.H.B.~Kemperman.
On small sumsets in an abelian group. {\it Acta Math.} {\bf 103}
(1960) 63--88.

\bibitem{grynkiewicz} D.J.~Grynkiewicz.   A step beyond Kemperman's
Structure Theorem, \emph{Mathematika} {\bf 55} (2009), no. 1-2, 67–114.

\bibitem{lev} V.F. Lev. Critical pairs in abelian groups and
Kemperman's structure theorem. \emph{International Journal of Number
Theory,} \textbf{2} (2006) 379-396.

\bibitem{ao1} A.~Montejano and O.~Serra, Counting patterns in colored   
Orthogonal arrays, Preprint 2009.

\bibitem{tao} T.~Tao, V.~Vu. Additive combinatorics.
\emph{Cambridge Studies in Advanced Mathematics}, \textbf{105}.
Cambridge University Press, Cambridge, (2006).

\end{thebibliography}
\end{document}